\documentclass[12pt]{amsart}

\usepackage{amsmath,amssymb,amsthm,mathtools}
\usepackage[a4paper,margin=1in]{geometry}
\usepackage{hyperref}
\usepackage{enumitem}

\newtheorem{theorem}{Theorem}[section]
\newtheorem{lemma}[theorem]{Lemma}
\newtheorem{proposition}[theorem]{Proposition}
\newtheorem{corollary}[theorem]{Corollary}
\newtheorem{remark}[theorem]{Remark}
\theoremstyle{definition}

\newtheorem{conjecture}[theorem]{Conjecture}
\newtheorem{example}[theorem]{Example}

\DeclareMathOperator{\rad}{rad}

\title{Valuation Separation for Coprime Lucas Products}
\author{Dongyeon Kym}
\email{kynteger@dgu.ac.kr}
\date{}

\subjclass[2020]{Primary 11B39; Secondary 11B37, 11D61.}

\keywords{Lucas sequences, square classes, coprime products, primitive divisors, \(abc\) conjecture, Diophantine equations.}

\begin{document}
	
	\maketitle
	
\begin{abstract}
	Let \(U_n=U_n(P,Q)\) be a nondegenerate Lucas sequence with \(Q=\pm1\) and positive discriminant \(\Delta=P^2+4Q\).
	We study equations
	\[
	A y^k=\prod_{i=1}^r U_{n_i}(P,Q),\qquad k\ge 2,
	\]
	where the indices \(n_1,\ldots,n_r\) are pairwise coprime.
	The strong divisibility property makes the factors \(U_{n_i}\) pairwise coprime, so every local valuation defect outside \(A\) is forced to occur in a single factor.
	For \(k=2\), this gives a termwise square-class restriction: each \(U_{n_i}\) has signed squarefree part supported on the primes dividing \(A\).
	In particular, \(\Delta y^2=U_mU_n\), \((m,n)=1\), reduces to a finite square-class compatibility condition together with an integrality condition.
	Assuming the number-field \(abc\) conjecture over \(\mathbb Q(\sqrt\Delta)\), we prove finiteness for Lucas terms with squarefree part supported on a fixed finite set, and hence reduce the coprime product equations to a finite compatibility problem.
	We also give the \(k\)-th power analogue and a primitive-divisor obstruction.
\end{abstract}

\section{Introduction}

Let \(P,Q\in \mathbb Z\), and let \(U_n=U_n(P,Q)\) be the Lucas sequence of the first kind defined by
\[
U_0=0,\qquad U_1=1,\qquad U_{n+2}=P U_{n+1}+Q U_n .
\]
Throughout the paper we assume that
\[
Q=\pm 1,\qquad \Delta=P^2+4Q>0,
\]
and that the sequence is nondegenerate.
This sign convention differs from the common \(U_n(P,Q)\) convention by replacing \(Q\) with \(-Q\).
If \(\alpha,\beta\) are the roots of \(x^2-Px-Q\), then
\[
\alpha+\beta=P,\qquad \alpha\beta=-Q=\pm1,\qquad
\alpha-\beta=\sqrt\Delta,
\]
and
\[
U_n=\frac{\alpha^n-\beta^n}{\alpha-\beta}.
\]
Nondegeneracy means that \(\alpha/\beta\) is not a root of unity.

The purpose of this paper is to isolate and exploit a valuation-separation principle for coprime products of Lucas sequence terms. We study equations of the form
\begin{equation*}\label{eq:product}
A y^k=\prod_{i=1}^r U_{n_i}(P,Q), \qquad k\ge 2,
\end{equation*}
where \(A\in \mathbb Z\setminus\{0\}\) is fixed and the indices \(n_1,\ldots,n_r\) are pairwise coprime.
The principal case is \(k=2\), where the equation becomes a square-class problem; the same local separation mechanism also applies to higher powers.

The point of the coprimality hypothesis is not that the indices are arithmetically simple, but that it prevents local valuation defects from being shared among different Lucas factors.
Indeed, for \(Q=\pm1\) the sequence \(U_n(P,Q)\) is a strong divisibility sequence:
\[
\gcd(|U_a|,|U_b|)=|U_{\gcd(a,b)}|.
\]
Thus pairwise coprime indices give pairwise coprime Lucas terms.
If a prime \(p\nmid A\) divides one of the factors \(U_{n_i}\), then it divides no other factor.
Hence a global \(k\)-th power condition on the product forces
\[
v_p(U_{n_i})\equiv 0 \pmod{k}
\]
in the individual factor itself.
This is the basic valuation-separation principle of the paper.

The first result is the square-class form of the valuation-separation principle.

\begin{theorem}\label{thm:multi-rigidity}
	Let \(A\in\mathbb Z\setminus\{0\}\), and suppose that \(n_1,\ldots,n_r\) are pairwise coprime positive integers. If
	\[
	A y^2=\prod_{i=1}^r U_{n_i}(P,Q)
	\]
	for some \(y\in\mathbb Z\), then for every \(i\) there exist a signed squarefree integer \(e_i\) and an integer \(s_i\) such that
	\[
	U_{n_i}=e_i s_i^2,\qquad |e_i|\mid \operatorname{rad}(A).
	\]
Equivalently, for every \(i\) one has
\[
[U_{n_i}(P,Q)]\in G_A,
\]
where
\[
G_A=\langle -1,p:p\mid A\rangle\subset \mathbf Q^\times/(\mathbf Q^\times)^2.
\]
\end{theorem}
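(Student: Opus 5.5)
We argue prime by prime, using the strong divisibility property recalled in the introduction. First we dispose of degenerate cases. If some \(U_{n_i}=0\), the conclusion holds trivially for that index with \(s_i=0\). Nondegeneracy and \(n_i\ge1\) in fact exclude this, since \(U_n=0\) would force \(\alpha^n=\beta^n\). So every \(U_{n_i}\neq 0\), hence the product is nonzero and \(y\neq 0\).

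Next, pairwise coprimality of the indices gives, for \(i\neq j\),
\[
\gcd(|U_{n_i}|,|U_{n_j}|)=|U_{\gcd(n_i,n_j)}|=|U_1|=1,
\]
so the factors \(U_{n_1},\dots,U_{n_r}\) are pairwise coprime integers.

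Now fix a prime \(p\nmid A\) and take \(p\)-adic valuations of \(Ay^2=\prod_i U_{n_i}\). This gives
\[
2v_p(y)=\sum_{i=1}^r v_p(U_{n_i}).
\]
By pairwise coprimality, at most one index \(i\) has \(v_p(U_{n_i})>0\). Hence for that index \(v_p(U_{n_i})=2v_p(y)\) is even, and for every other index \(v_p(U_{n_i})=0\). In all cases \(v_p(U_{n_i})\equiv 0 \pmod 2\) for every \(i\) and every \(p\nmid A\).

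Finally, write each nonzero integer uniquely as \(U_{n_i}=e_is_i^2\) with \(e_i\) signed squarefree. Then \(e_i=\pm\prod_{p:\,v_p(U_{n_i})\text{ odd}}p\). Every prime dividing \(e_i\) divides \(A\), so \(|e_i|\mid\operatorname{rad}(A)\). The square class of \(U_{n_i}\) is \([e_i]\), which lies in \(\langle -1,p:p\mid A\rangle=G_A\); this proves the equivalent formulation.

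The only step with any content is the coprimality of the factors, which relies on the strong divisibility property for \(Q=\pm1\) as recalled in the introduction. Everything else is bookkeeping of valuations, so we expect no real obstacle.
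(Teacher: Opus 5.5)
Your proof is correct and follows the paper's argument essentially step for step. Strong divisibility makes the factors \(U_{n_i}\) pairwise coprime (Lemma~\ref{lem:pairwise-coprime-terms}), so for \(p\nmid A\) the even total valuation \(2v_p(y)\) is carried by at most one factor (Lemma~\ref{lem:valuation-parity-multi}), and hence the signed squarefree part of each \(U_{n_i}\) is supported on primes dividing \(A\); your remark that nondegeneracy forces \(U_{n_i}\neq 0\) and \(y\neq 0\) matches the paper's observation in Section~2.
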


The theorem should be viewed as a product-to-factor principle.
It does not assert that a Lucas term has small squarefree part in isolation.
Rather, it says that if such a term occurs inside a coprime product which is a square up to the fixed coefficient \(A\), then all odd valuation defects outside \(A\) are forbidden term by term.

In the two-term equation
\[
\Delta y^2=U_m(P,Q)U_n(P,Q),\qquad \gcd(m,n)=1,
\]
this gives a finite square-class criterion. For an integer \(A\ne0\), put
\[
G_A=\langle -1,p:p\mid A\rangle
\subseteq \mathbb Q^\times/(\mathbb Q^\times)^2.
\]
The preceding theorem implies that every integral solution satisfies
\[
[U_m],[U_n]\in G_\Delta.
\]
On the other hand, the equation itself is equivalent over \(\mathbb Q\) to
\[
[U_m][U_n]=[\Delta]
\qquad\text{in }\mathbb Q^\times/(\mathbb Q^\times)^2.
\]
Thus the possible square classes are governed by a finite compatibility relation inside \(G_\Delta\).
The distinction between rational and integral solutions is then exactly the additional integrality condition that \(U_mU_n/\Delta\) be an integer square.

The second part of the paper uses the number-field \(abc\) conjecture to turn this termwise square-class restriction into an index finiteness statement.
If \(S\) is a finite set of rational primes, define
\[
G_S=\langle -1,p:p\in S\rangle
\subseteq \mathbb Q^\times/(\mathbb Q^\times)^2.
\]
We prove the following conditional theorem.

\begin{theorem}\label{thm:conditional}
	Assume the \(abc\) conjecture over \(K=\mathbb Q(\sqrt\Delta)\).
	Let \(S\) be a finite set of rational primes. Then the set of positive integers \(n\) for which
	\[
	[U_n(P,Q)]\in G_S
	\]
	is finite.
	Equivalently, there are only finitely many \(n\) for which
	\[
	U_n=e s^2,
	\]
	where \(e\in\mathbb Z\) is signed squarefree and
	\(\operatorname{Supp}(e)\subseteq S\).
\end{theorem}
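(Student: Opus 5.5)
The strategy is to turn the equation \(U_n=es^2\) into an \(abc\) triple over \(K=\mathbb Q(\sqrt\Delta)\) whose radical is much smaller than its height, and then let the \(abc\) conjecture bound \(n\).

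\textbf{Step 1: reduction to finitely many \(e\) and a unit equation.} Since \(S\) is finite, the group \(G_S\) is finite. So there are only finitely many signed squarefree \(e\) with \(\operatorname{Supp}(e)\subseteq S\), and it suffices to fix one such \(e\) and show that \(U_n=es^2\) has finitely many solutions \(n\). Put \(\delta=\sqrt\Delta=\alpha-\beta\). The identity \(\alpha^n-\beta^n=\delta U_n\), together with \(\alpha\beta=-Q=\pm1\), gives
\[
\alpha^{2n}-(\alpha\beta)^n=\delta\,\alpha^n U_n=\delta e\,\alpha^n s^2 .
\]
This is an identity \(a+b=c\) in \(\mathcal O_K\) with \(a=\alpha^{2n}\), \(b=-(\alpha\beta)^n=\pm1\) and \(c=\delta e\alpha^ns^2\). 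Here \(\alpha\) is a unit, and it is not a root of unity because the sequence is nondegenerate.

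\textbf{Step 2: height and radical estimates.} Since \(b=\pm1\), the projective height \(H(a,b,c)\) is comparable to \(H(\alpha^{2n})=\lvert\alpha\rvert^{2n}\), up to a constant depending only on \(P\) and \(Q\). This holds because \(a\) and \(b\) are units and \(c=a+b\), so the only contribution comes from the archimedean places. In the radical, \(a\) and \(b\) contribute nothing, since they are units. The element \(c\) contributes the norms of the primes dividing \(\delta e\), which is a bounded set, together with the primes dividing \(s\). Hence
\[
\operatorname{rad}_K(a,b,c)\ll_{S,P,Q}\ \lvert N_{K/\mathbb Q}(s)\rvert\le s^2 .
\]
Also \(\lvert U_n\rvert\asymp\lvert\alpha\rvert^n/\sqrt\Delta\), since \(\lvert\beta\rvert=\lvert\alpha\rvert^{-1}\) after choosing \(\lvert\alpha\rvert>1\). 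This gives \(s^2=\lvert U_n/e\rvert\ll\lvert\alpha\rvert^n\).

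\textbf{Step 3: apply \(abc\) over \(K\).} The \(abc\) conjecture over \(K\), with \(\varepsilon=1/2\) and normalized heights, gives \(H(a,b,c)\ll_\varepsilon \operatorname{rad}(a,b,c)^{1+\varepsilon}\). Combined with Step 2 this reads
\[
\lvert\alpha\rvert^{2n}\ll \lvert\alpha\rvert^{n(1+\varepsilon)}=\lvert\alpha\rvert^{3n/2}.
\]
Since \(\lvert\alpha\rvert>1\), this forces \(n\) to be bounded. Taking the union over the finitely many admissible \(e\) proves the theorem.

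\textbf{Main obstacle.} The crux is the normalization in Step 2: matching the number-field height and radical (local degrees, the choice of normalization in the conjecture) to the archimedean sizes \(\lvert\alpha\rvert^{2n}\) and \(s^2\). One must check that the multiplicative saving, roughly a factor \(\lvert\alpha\rvert^{n}\) against \(\lvert\alpha\rvert^{2n}\), survives the exponent normalization. I would first write everything with absolute logarithmic heights. Then \(h(a:b:c)=2n\log\lvert\alpha\rvert/[K:\mathbb Q]\cdot[K:\mathbb Q]+O(1)\), and the log radical is at most \(\frac{1}{[K:\mathbb Q]}\log\lvert N(s)\rvert+O(1)\le n\log\lvert\alpha\rvert+O(1)\). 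The comparison \(2n\le(1+\varepsilon)n+O(1)\) is then degree-independent. A minor point is that \(s\) may be divisible by primes dividing \(\delta e\); this only helps, since those primes are already counted in the bounded part of the radical.
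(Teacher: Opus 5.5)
Your argument is correct and is essentially the paper's proof. Your triple $(\alpha^{2n},-(\alpha\beta)^n,\sqrt\Delta\,e\alpha^n s^2)$ is the paper's triple $(\beta^n,\sqrt\Delta\,es^2,\alpha^n)$, permuted, with one sign changed and multiplied by the unit $\alpha^n$, so it has the same height and radical, and both proofs use the same factor-of-two gap between height and radical coming from $N_{K/\mathbb Q}(s)=s^2\ll|\alpha|^n$.

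One caveat concerns your final paragraph. With the normalization $h=\frac1{[K:\mathbb Q]}\sum_v\log\max\{\cdot\}$ one has $h(a:b:c)=n\log|\alpha|+O(1)$; your value $2n\log|\alpha|$ is the unnormalized $\log H_K$. Likewise $\operatorname{rad}_K\le\log|s|+O(1)=\frac n2\log|\alpha|+O(1)$, which is sharper than your bound $n\log|\alpha|$. The two slips cancel only by accident: the correct height combined with your weaker radical bound would give nothing. Steps~2--3, read consistently in the unnormalized form, are fine.
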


Combining the unconditional rigidity theorem with this conditional \(S\)-square finiteness theorem gives a uniform bound for all indices appearing in coprime product equations.

\begin{corollary}\label{cor:indices-bound}
	Assume the \(abc\) conjecture over \(K=\mathbb Q(\sqrt\Delta)\).
	Fix \(A\in\mathbb Z\setminus\{0\}\) and \(r\ge1\).
	Then there exists a constant \(C=C(P,Q,A)\) such that, whenever \(n_1,\ldots,n_r\) are pairwise coprime and
	\[
	A y^2=\prod_{i=1}^r U_{n_i}(P,Q)
	\]
	for some \(y\in\mathbb Z\), one has
	\[
	\max_i n_i<C.
	\]
	In particular, for fixed \(A\) and \(r\), only finitely many pairwise coprime index \(r\)-tuples can occur.
\end{corollary}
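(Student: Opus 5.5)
The corollary follows by combining Theorem~\ref{thm:multi-rigidity} with Theorem~\ref{thm:conditional} applied to a single finite set of primes. Put
\[
S=S(A)=\{p \text{ prime}: p\mid A\}.
\]
This is a finite set depending only on \(A\), and by definition \(G_A=G_S\) inside \(\mathbb Q^\times/(\mathbb Q^\times)^2\).

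The argument then has three steps.

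\textbf{Step 1 (rigidity).} Suppose \(n_1,\ldots,n_r\) are pairwise coprime positive integers and \(Ay^2=\prod_i U_{n_i}(P,Q)\) has a solution \(y\in\mathbb Z\). Theorem~\ref{thm:multi-rigidity} gives \([U_{n_i}]\in G_A=G_S\) for every \(i\). This uses only pairwise coprimality and the equation; the size of \(r\) plays no role.

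\textbf{Step 2 (finiteness).} By Theorem~\ref{thm:conditional}, applied to this \(S\), the set
\[
N_S=\{n\ge1: [U_n]\in G_S\}
\]
is finite. This is the only place where the \(abc\) conjecture over \(K=\mathbb Q(\sqrt\Delta)\) is used.

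\textbf{Step 3 (the bound).} Define \(C=1+\max N_S\), with \(C=1\) if \(N_S\) is empty; in that case the equation has no solutions with positive indices at all. Since \(N_S\) depends only on \(P,Q,S\), and \(S\) depends only on \(A\), the constant \(C\) depends only on \((P,Q,A)\). It is in particular independent of \(r\), which is stronger than the statement requires. By Step 1 every \(n_i\) lies in \(N_S\), so \(\max_i n_i<C\).

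\textbf{Finitely many tuples.} For fixed \(r\), each index ranges over \(\{1,\ldots,C-1\}\), so there are at most \((C-1)^r\) index \(r\)-tuples.

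\textbf{Edge cases.} Positivity of the \(n_i\) is part of the hypothesis of Theorem~\ref{thm:multi-rigidity}, so \(U_0=0\) never arises. Solutions with \(y=0\) would require some \(U_{n_i}=0\), which nondegeneracy excludes for \(n_i\ge1\). Neither case causes a problem.

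No step is a real obstacle: the corollary is formal once the two theorems are in hand. The only point to check is that Theorem~\ref{thm:conditional} applies uniformly to the single set \(S(A)\), rather than to a set of primes that varies with the tuple.
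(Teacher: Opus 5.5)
Your argument is correct and is the same as the paper's: Theorem~\ref{thm:multi-rigidity} places every \([U_{n_i}]\) in \(G_A=G_{\operatorname{Supp}(A)}\). Theorem~\ref{thm:conditional}, applied once with the fixed set \(S=\operatorname{Supp}(A)\), then bounds all indices uniformly and independently of \(r\). One minor point is that \(N_S\) is never empty, since \(U_1=1\) has trivial class, so your \(C=1\) case does not arise.
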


\begin{corollary}[Finite reduction under \(abc\)]\label{thm:finite-reduction}
Assume the number-field \(abc\) conjecture over \(K=\mathbb Q(\sqrt{\Delta})\).
Fix \(A\in\mathbb Z\setminus\{0\}\), and put
\[
T(P,Q,A)=\{n\geq 2:[U_n(P,Q)]\in G_A\}.
\]
Then \(T(P,Q,A)\) is finite. Moreover, after deleting the trivial factors \(U_1=1\), every
solution of
\[
Ay^2=\prod_{i=1}^r U_{n_i}(P,Q),\qquad (n_i,n_j)=1\ (i\neq j),
\]
has all its indices in \(T(P,Q,A)\). Define the coprimality graph on \(T(P,Q,A)\) to be the graph whose vertices are the elements of \(T(P,Q,A)\) and whose edges join pairs of coprime integers.
Then the nontrivial indices form a clique in this graph.
The remaining conditions are the finite square-class compatibility condition
\[
\prod_i [U_{n_i}]=[A]\quad\text{in }
\mathbb Q^\times/(\mathbb Q^\times)^2
\]
together with the corresponding integrality condition.
\end{corollary}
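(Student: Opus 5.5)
The plan is to assemble Corollary~\ref{thm:finite-reduction} from Theorem~\ref{thm:multi-rigidity} and Theorem~\ref{thm:conditional}. For finiteness of \(T(P,Q,A)\), take \(S\) to be the set of primes dividing \(A\). Then \(G_A=G_S\) as subgroups of \(\mathbb Q^\times/(\mathbb Q^\times)^2\), since both are generated by \(-1\) and the primes \(p\mid A\). Theorem~\ref{thm:conditional} then says that only finitely many \(n\ge1\) satisfy \([U_n]\in G_S\), so \(T(P,Q,A)\) is finite. One point needs care: the square class \([U_n]\) only makes sense when \(U_n\neq0\). By nondegeneracy \(U_n\ne0\) for \(n\ge1\), because \(U_n=0\) would force \((\alpha/\beta)^n=1\).

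Next, take a solution \(Ay^2=\prod U_{n_i}\) with pairwise coprime indices. Since every factor is nonzero, \(y\neq0\). Theorem~\ref{thm:multi-rigidity} gives \([U_{n_i}]\in G_A\) for every \(i\). Deleting the factors with \(n_i=1\) (where \(U_1=1\)) changes neither the product nor the coprimality of the remaining indices. Every remaining index is then \(\ge2\) and lies in \(T(P,Q,A)\).

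For the clique statement, the remaining indices are pairwise coprime, so any two distinct ones are joined by an edge of the coprimality graph. They must also be distinct: two equal indices \(n\ge2\) would have \(\gcd(n,n)=n\ne1\). Hence the nontrivial indices form a clique, and in particular \(r\) minus the number of trivial factors is at most \(|T(P,Q,A)|\).

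Finally, reducing the equation modulo squares gives \(\prod_i[U_{n_i}]=[A][y]^2=[A]\) in \(\mathbb Q^\times/(\mathbb Q^\times)^2\), which is the stated compatibility condition. Conversely, if this condition holds then \(\prod U_{n_i}/A\) is a rational square, and an integral \(y\) exists exactly when that quotient is an integer; this is the integrality condition. Since \(T\) is finite, the set of cliques is finite, so the whole problem reduces to a finite check.

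No step is a genuine obstacle, because all the depth sits in Theorem~\ref{thm:conditional}. The only points needing care are the identification \(G_A=G_S\), the nonvanishing of \(U_n\) so that square classes are defined, and treating the \(U_1\) factors separately. Those factors must be handled apart because \(1\) is coprime to everything and may be repeated, so without deleting them the indices would not form a clique of distinct vertices.
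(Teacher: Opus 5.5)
Your proposal is correct and follows essentially the same route as the paper: finiteness of \(T(P,Q,A)\) from Theorem~\ref{thm:conditional} with \(S=\operatorname{Supp}(A)\), membership of the nontrivial indices via Theorem~\ref{thm:multi-rigidity}, the clique property from pairwise coprimality, and the compatibility and integrality conditions as in Proposition~\ref{prop:product-square} (this is exactly how Theorem~\ref{thm:tpqa} and its corollary assemble the statement). Your extra remarks on the nonvanishing of \(U_n\) and the distinctness of the nontrivial indices are correct refinements that the paper leaves implicit.
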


We also obtain an unconditional primitive-divisor obstruction.
If a large index \(n\) occurs in a coprime product solution and \(n\) is not the rank of apparition of any prime divisor of \(A\), then every primitive prime divisor \(p\nmid A\) of \(U_n\) must satisfy
\[
v_p(U_n)\geq 2.
\]
Thus large admissible indices force a Lucas--Wieferich type phenomenon at their primitive divisors outside the fixed coefficient.

The valuation-separation argument is not intrinsically quadratic.
For this reason we formulate the \(k\)-th power analogue in Section~7. If
\[
A y^k=\prod_i U_{n_i}(P,Q),\qquad k\geq 2,
\]
with pairwise coprime indices, then every prime \(p\nmid A\) occurs in each individual factor \(U_{n_i}\) with valuation divisible by \(k\).
Thus each \(U_{n_i}\) is a \(k\)-th power up to primes dividing \(A\).
Under the same number-field \(abc\) hypothesis, the corresponding \(S\)-supported \(k\)-th power condition again gives a uniform bound for all occurring indices.

\paragraph{Relation with previous work.}
Primitive divisor theorems for Lucas and Lehmer sequences, beginning with Zsigmondy's theorem~\cite{Zsigmondy} and Carmichael's work~\cite{Carmichael1913} and culminating in the theorem of Bilu--Hanrot--Voutier~\cite{BiluHanrotVoutier2001}, show that sufficiently large Lucas terms possess new prime divisors.
Perfect powers and \(S\)-unit multiples of powers in linear recurrence sequences have also been studied extensively, for instance by Pethő~\cite{Petho1982} and by Bugeaud--Mignotte--Siksek~\cite{BugeaudMignotteSiksek2006}.
Products of Lucas sequence terms were studied by Luca and Shorey~\cite{LucaShorey2009} in the setting of perfect powers with indices lying in intervals.

The present paper uses these classical inputs in a somewhat different direction.
We do not prove a new primitive divisor theorem, nor do we attempt to classify perfect powers among individual recurrence terms.
Rather, we consider product equations in which the indices are required to be pairwise coprime. 
In this setting the strong divisibility property has a direct consequence: the corresponding Lucas terms are pairwise coprime, and hence a local valuation defect outside the fixed coefficient cannot be shared between different factors.
Thus a global power condition on the product imposes a termwise square-class, or more generally power-class, condition on the individual factors.

The role of the \(abc\) conjecture is then separated from this elementary valuation argument.
Once valuation separation has reduced the product equation to an \(S\)-supported square-class or power-class condition for single Lucas terms, the number-field \(abc\) conjecture over \(\mathbb Q(\sqrt{\Delta})\) is used to obtain finiteness for those terms.
This yields, under \(abc\), a finite reduction of the original coprime product equations.

The paper is organised as follows.
Section~2 recalls the elementary Lucas sequence facts used throughout the paper, especially the strong divisibility
property.
Section~3 proves the unconditional valuation-parity rigidity theorem for coprime products.
Section~4 records the square-class criteria for rational and integral solutions.
Section~5 gives the \(abc\)-conditional finiteness theorem for Lucas terms with squarefree part supported on a fixed finite set
of primes.
Section~6 gives the finite reduction of coprime product equations and records the primitive-divisor obstruction.
Section~7 develops the power-class versions of valuation separation.
Section~8 gives examples.

	\section{Lucas sequence preliminaries}
	
	Let $\alpha,\beta$ be the roots of $x^2-Px-Q$.  Since $Q=\pm1$, we have
	$|\alpha\beta|=1$. Since the sequence is nondegenerate, \(U_n\neq 0\) for every \(n\ge 1\); indeed \(U_n=0\) would imply \((\alpha/\beta)^n=1\). Hence in every equation considered below the variable \(y\) is automatically nonzero.
	
	\begin{lemma}\label{lem:growth}
		After relabelling $\alpha$ and $\beta$ if necessary, one has
		\[
		|\alpha|>1>|\beta|.
		\]
		Consequently,
		\[
		\log |U_n|=n\log |\alpha|+O_{P,Q}(1)
		\qquad(n\ge1).
		\]
	\end{lemma}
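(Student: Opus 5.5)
The plan is to work directly with the roots of \(x^2-Px-Q\). Since \(\Delta=P^2+4Q>0\), the roots \(\alpha,\beta\) are real and distinct, and \(\alpha\beta=-Q=\pm1\) gives \(|\alpha|\,|\beta|=1\). First I rule out \(|\alpha|=|\beta|\). If it held, both absolute values would equal \(1\). Real numbers of absolute value \(1\) are \(\pm1\), and since \(\alpha\ne\beta\) we would get \(\alpha/\beta=-1\), a root of unity, contradicting nondegeneracy. Hence \(|\alpha|\neq|\beta|\). Together with \(|\alpha||\beta|=1\), after swapping labels we obtain \(|\alpha|>1>|\beta|\), and in fact \(|\beta|=|\alpha|^{-1}\).

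For the growth estimate I use the Binet formula
\[
U_n=\frac{\alpha^n-\beta^n}{\sqrt\Delta}=\frac{\alpha^n}{\sqrt\Delta}\Bigl(1-(\beta/\alpha)^n\Bigr).
\]
Taking logarithms gives
\[
\log|U_n| = n\log|\alpha|-\tfrac12\log\Delta+\log\bigl|1-(\beta/\alpha)^n\bigr|.
\]
It remains to show that the last term is bounded uniformly in \(n\ge1\). Put \(\rho=|\beta/\alpha|=|\alpha|^{-2}<1\). The upper bound is \(\log|1-(\beta/\alpha)^n|\le\log(1+\rho^n)\le\log 2\). For the lower bound, \(|1-(\beta/\alpha)^n|\ge 1-\rho^n\ge 1-\rho\), which is a positive constant depending only on \(P,Q\). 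So the error term lies in \([\log(1-\rho),\log 2]\), and the constant \(-\tfrac12\log\Delta\) is absorbed into the \(O_{P,Q}(1)\).

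I do not expect any genuine obstacle here. The only points needing care are the use of nondegeneracy to exclude \(|\alpha|=|\beta|=1\) and the choice of the uniform lower bound \(1-\rho\), which works for every \(n\ge1\). Because of that lower bound, nonvanishing of \(U_n\) is never needed as a separate input, though it follows anyway. An alternative for the lower bound would be to note that \(U_n\) is a nonzero integer, so \(|U_n|\ge1\), but the bound \(1-\rho\) is cleaner and fully explicit.
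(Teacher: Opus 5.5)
Your proof is correct. Its skeleton matches the paper's: the trichotomy forced by \(|\alpha\beta|=1\), followed by Binet's formula. Both steps, however, are carried out differently.

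To exclude \(|\alpha|=|\beta|=1\), the paper appeals to Kronecker's theorem (an algebraic integer all of whose conjugates lie on the unit circle is a root of unity). That argument never uses \(\Delta>0\). You instead use that \(\Delta>0\) makes \(\alpha,\beta\) real, so modulus \(1\) forces \(\alpha,\beta\in\{\pm1\}\) and hence \(\alpha/\beta=\pm1\). Your route is shorter and entirely elementary. The paper's route is more robust: it would go through without the sign hypothesis, and it implicitly shows that for \(Q=\pm1\) nondegeneracy alone already rules out non-real roots. Since \(\Delta>0\) is a standing assumption, nothing is lost by your choice.

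For the error term, the paper only says that \(1-(\beta/\alpha)^n\) is bounded away from \(0\) for large \(n\). It then absorbs the finitely many small \(n\) into the constant, which tacitly uses \(U_n\neq0\) (recorded just before the lemma via nondegeneracy). Your observation that \(\rho^n\le\rho\) for \(n\ge1\) gives \(1-\rho\le|1-(\beta/\alpha)^n|\le2\) uniformly in \(n\). So you get an explicit constant, and the nonvanishing of \(U_n\) comes out as a byproduct.

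One caution about your closing remark. The bound \(|U_n|\ge1\) by itself gives only \(\log|U_n|\ge0\), not a lower bound of the shape \(n\log|\alpha|-C\). It can dispose of finitely many \(n\) only once the large-\(n\) estimate is in hand, which is exactly how the paper handles the lower bound.
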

	
	\begin{proof}
		Since $|\alpha\beta|=1$, either both roots have absolute value $1$ or one has absolute value greater than $1$ and the other has absolute value less than $1$.
		If both had absolute value $1$, then both algebraic integers would have all conjugates on the unit circle.
		By Kronecker's theorem they would be roots of unity, and hence $\alpha/\beta$ would be a root of unity, contradicting
		nondegeneracy. Relabel so that $|\alpha|>1>|\beta|$.
		
		Binet's formula gives
		\[
		U_n=\frac{\alpha^n-\beta^n}{\alpha-\beta}
		=\frac{\alpha^n}{\sqrt\Delta}
		\left(1-\left(\frac{\beta}{\alpha}\right)^n\right).
		\]
		Since $|\beta/\alpha|<1$, the factor in parentheses is bounded above and below away from zero for all sufficiently large $n$.  The finitely many remaining values of $n$ are absorbed into the error term.
	\end{proof}

	\begin{lemma}\label{lem:strong-divisibility}
		For all positive integers \(a,b\), one has
		\[
		\gcd(|U_a|,|U_b|)=|U_{\gcd(a,b)}|.
		\]
		In particular, if \(\gcd(a,b)=1\), then \(\gcd(U_a,U_b)=1\).
	\end{lemma}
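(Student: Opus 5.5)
We will prove Lemma~\ref{lem:strong-divisibility} by the classical route: first that consecutive terms are coprime, then an addition formula, and finally a Euclidean-algorithm induction on \(a+b\).

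\textbf{Step 1: consecutive terms are coprime.} We show \(\gcd(U_n,U_{n+1})=1\) for all \(n\ge0\), by induction. The case \(n=0\) holds since \(U_0=0\) and \(U_1=1\). For the inductive step, suppose a prime \(p\) divides both \(U_n\) and \(U_{n+1}\). The recurrence gives \(QU_{n-1}=U_{n+1}-PU_n\), and \(Q=\pm1\) is a unit, so \(p\mid U_{n-1}\). Descending in this way we reach \(p\mid U_1=1\), which is impossible. The hypothesis \(Q=\pm1\) is used exactly here: for general \(Q\) the descent only shows \(p\mid Q\).

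\textbf{Step 2: addition formula and divisibility.} Next we establish the identity
\[
U_{m+n}=U_{m}U_{n+1}+QU_{m-1}U_n \qquad (m\ge1,\ n\ge0).
\]
It follows by induction on \(n\) with \(m\) fixed, using only the recurrence; alternatively, it can be checked directly from Binet's formula together with \(\alpha\beta=-Q\). Taking \(m=n(k-1)\) and \(n\mapsto n\) and inducting on \(k\) gives \(U_n\mid U_{kn}\). Then, writing \(a=qb+r\) with \(0\le r<b\) and applying the addition formula with \(m=qb\) and \(n=r\), we get
\[
U_a=U_{qb}U_{r+1}+QU_{qb-1}U_r .
\]
Since \(U_b\mid U_{qb}\), this yields \(\gcd(U_a,U_b)=\gcd(QU_{qb-1}U_r,\,U_b)\). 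By Step 1, \(\gcd(U_{qb-1},U_{qb})=1\), and since \(U_b\mid U_{qb}\) it follows that \(\gcd(U_{qb-1},U_b)=1\). Using again that \(Q\) is a unit, we conclude \(\gcd(U_a,U_b)=\gcd(U_r,U_b)\).

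\textbf{Step 3: Euclidean induction.} The reduction \((a,b)\mapsto(b,r)\) mirrors the Euclidean algorithm. It terminates at \(\gcd(U_{d},U_0)=|U_d|\) with \(d=\gcd(a,b)\), since \(U_0=0\). The coprime case is then immediate because \(U_1=1\).

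The only delicate points are the bookkeeping in the addition formula (indices \(m-1\ge0\) and the sign convention for \(Q\)) and the coprimality of \(U_{qb-1}\) with \(U_b\). Both reduce to Step 1, which is where the hypothesis that \(Q\) is a unit is essential. We expect no real obstacle; the step needing the most care is verifying the addition formula under this paper's sign convention \(U_{n+2}=PU_{n+1}+QU_n\).
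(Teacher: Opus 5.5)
Your proof is correct and follows the paper's route: you use coprimality of consecutive terms, the addition formula $U_{m+n}=U_mU_{n+1}+QU_{m-1}U_n$, and a Euclidean-algorithm reduction. The only cosmetic difference is that you reduce modulo $U_b$ via $U_b\mid U_{qb}$ and $\gcd(U_{qb-1},U_b)=1$, using division with remainder (the case $q=0$ being trivial), whereas the paper proves $\gcd(U_{m+n},U_n)=\gcd(U_m,U_n)$ one subtraction step at a time using $\gcd(U_n,U_{n+1})=1$.
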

	
	\begin{proof}
		We recall the standard argument.
		Since \(Q=\pm1\), consecutive terms of the sequence are coprime.
		Indeed, any common divisor of \(U_n\) and \(U_{n+1}\) also divides \(Q U_{n-1}=U_{n+1}-P U_n\), and induction gives
		\(\gcd(U_n,U_{n+1})=1\).
		
		The Lucas addition formula
		\[
		U_{m+n}=U_mU_{n+1}+Q U_{m-1}U_n
		\]
		implies the Euclidean reduction
		\[
		\gcd(U_{m+n},U_n)=\gcd(U_m,U_n),
		\]
		because \(\gcd(U_n,Q U_{n+1})=1\).
		Iterating this reduction along the Euclidean algorithm gives
		\[
		\gcd(|U_a|,|U_b|)=|U_{\gcd(a,b)}|.
		\]
		The final assertion follows from \(U_1=1\).
	\end{proof}
	This strong divisibility property is classical; see, for example, Everest, van der Poorten, Shparlinski and Ward~\cite[Chapter 3]{EverestPoortenShparlinskiWard2003}.

	\section{Valuation rigidity for coprime products}

We now prove the unconditional valuation-parity theorem for pairwise coprime products.

\begin{lemma}\label{lem:pairwise-coprime-terms}
	If $n_1,\ldots,n_r$ are pairwise coprime positive integers, then the nonzero
	integers $U_{n_1},\ldots,U_{n_r}$ are pairwise coprime.
\end{lemma}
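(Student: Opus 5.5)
This is a direct consequence of Lemma~\ref{lem:strong-divisibility}, so the plan is short. Nonvanishing comes first. By nondegeneracy, as noted at the start of Section~2, \(U_n\neq 0\) for every \(n\ge1\). Hence each \(U_{n_i}\) is a nonzero integer and the greatest common divisors below are well defined.

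Next, fix indices \(i\neq j\). Pairwise coprimality of the indices gives \(\gcd(n_i,n_j)=1\). Applying Lemma~\ref{lem:strong-divisibility} with \(a=n_i\) and \(b=n_j\) yields
\[
\gcd(|U_{n_i}|,|U_{n_j}|)=|U_{\gcd(n_i,n_j)}|=|U_1|=1 .
\]
Since the gcd is insensitive to signs, \(\gcd(U_{n_i},U_{n_j})=1\). As \(i\neq j\) were arbitrary, the terms \(U_{n_1},\ldots,U_{n_r}\) are pairwise coprime.

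There is no real obstacle here; the only point needing care is the degenerate case where some index equals \(1\) or two indices coincide. If \(n_i=n_j=1\) for \(i\neq j\), these indices are still coprime, and \(U_1=1\) is coprime to everything, so the argument is unaffected. Coincident indices \(n_i=n_j\ge2\) cannot occur, because \(\gcd(n_i,n_j)=n_i\neq 1\). I would also record, for use in Theorem~\ref{thm:multi-rigidity}, the equivalent local form: every prime \(p\) divides at most one of the factors \(U_{n_i}\).
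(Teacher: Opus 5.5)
Your proof is correct and is the same as the paper's: for $i\neq j$ you apply Lemma~\ref{lem:strong-divisibility} to get $\gcd(|U_{n_i}|,|U_{n_j}|)=|U_{\gcd(n_i,n_j)}|=|U_1|=1$. Your remarks on nonvanishing and on repeated indices are harmless additions that the paper leaves implicit.
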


\begin{proof}
	For $i\ne j$, Lemma \ref{lem:strong-divisibility} gives
	\[
	\gcd(|U_{n_i}|,|U_{n_j}|)=|U_{\gcd(n_i,n_j)}|=|U_1|=1.
	\]
\end{proof}

\begin{lemma}\label{lem:valuation-parity-multi}
	Let $A\in\mathbb Z\setminus\{0\}$, and suppose that
	$n_1,\ldots,n_r$ are pairwise coprime.  If
	\[
	A y^2=\prod_{i=1}^r U_{n_i}
	\]
	with $y\in\mathbb Z$, then for every rational prime $p\nmid A$ and every
	$1\le i\le r$ one has
	\[
	v_p(U_{n_i})\equiv0\pmod2.
	\]
\end{lemma}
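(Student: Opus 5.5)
The plan is to take $p$-adic valuations of both sides of the equation and use Lemma~\ref{lem:pairwise-coprime-terms} to isolate a single factor. First we note that every quantity is nonzero. Nondegeneracy gives $U_{n_i}\neq0$ for all $i$ (Section~2), and then $A\neq0$ forces $y\neq0$. Hence all valuations below are finite nonnegative integers (or, for $v_p(A)$, simply zero), and we may apply $v_p$ termwise to the identity $Ay^2=\prod_i U_{n_i}$.

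Fix a prime $p\nmid A$ and an index $i$. If $p\nmid U_{n_i}$, then $v_p(U_{n_i})=0$ is even and there is nothing to prove. Otherwise $p\mid U_{n_i}$. By Lemma~\ref{lem:pairwise-coprime-terms} the terms $U_{n_1},\dots,U_{n_r}$ are pairwise coprime, so $p\nmid U_{n_j}$ and hence $v_p(U_{n_j})=0$ for every $j\neq i$. Taking $v_p$ of the equation then gives
\[
v_p(A)+2v_p(y)=\sum_{j=1}^r v_p(U_{n_j})=v_p(U_{n_i}).
\]
Since $p\nmid A$, we have $v_p(A)=0$, so $v_p(U_{n_i})=2v_p(y)\equiv0\pmod 2$.

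There is no real obstacle here. The only points needing care are that the indices need not be distinct from $1$, which is harmless because $U_1=1$ has all valuations zero, and that $y\neq0$, which keeps $v_p(y)$ finite. Both points have already been handled above. The substantive input is the strong divisibility of Lemma~\ref{lem:strong-divisibility}, which enters through Lemma~\ref{lem:pairwise-coprime-terms} and relies on $Q=\pm1$.
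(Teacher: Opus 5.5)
Your proof is correct and follows the paper's argument: take $p$-adic valuations for $p\nmid A$, and use the pairwise coprimality from Lemma~\ref{lem:pairwise-coprime-terms} to conclude that the even quantity $2v_p(y)$ equals the valuation of the single factor divisible by $p$. Your explicit checks that $y\neq 0$ and that the case $p\nmid U_{n_i}$ is trivial are slightly more careful than the paper's version, but the route is the same.
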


\begin{proof}
	Fix a rational prime $p\nmid A$.  Taking $p$-adic valuations in the equation
	gives
	\[
	2v_p(y)=\sum_{i=1}^r v_p(U_{n_i}).
	\]
	The right-hand side is even.  By Lemma \ref{lem:pairwise-coprime-terms}, the integers $U_{n_i}$ are pairwise coprime.  Hence $p$ divides at most one of them. Therefore each individual valuation $v_p(U_{n_i})$ is even.
\end{proof}

\begin{proof}[Proof of Theorem \ref{thm:multi-rigidity}]
	For each $i$, write
	\[
	U_{n_i}=e_i s_i^2,
	\]
	where $e_i=\operatorname{sqf}(U_{n_i})$ is signed squarefree and $s_i\in\mathbb Z$.  If a rational prime $p$ divides $e_i$, then $v_p(U_{n_i})$ is odd. By Lemma \ref{lem:valuation-parity-multi}, this is impossible unless $p\mid A$.  Hence every prime divisor of $e_i$ divides $A$, and therefore $|e_i|\mid\rad(A)$.
\end{proof}

\begin{corollary}\label{cor:two-term-rigidity}
	Suppose that
	\[
	\Delta y^2=U_mU_n,
	\qquad
	\gcd(m,n)=1.
	\]
	Then there exist signed squarefree integers $e_m,e_n$ and integers $s_m,s_n$
	such that
	\[
	U_m=e_m s_m^2,
	\qquad
	U_n=e_n s_n^2,
	\qquad
	|e_m|,|e_n|\mid\rad(\Delta).
	\]
\end{corollary}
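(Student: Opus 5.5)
This is the case \(r=2\), \(A=\Delta\) of Theorem~\ref{thm:multi-rigidity}, so the plan is simply to specialise that theorem rather than argue afresh. Since \(\gcd(m,n)=1\), the indices \(n_1=m\), \(n_2=n\) are pairwise coprime positive integers. The standing hypothesis \(\Delta>0\) gives \(\Delta\in\mathbb Z\setminus\{0\}\), so \(\Delta\) is an admissible coefficient. The equation \(\Delta y^2=U_mU_n\) is then exactly the equation \(Ay^2=\prod_{i=1}^2 U_{n_i}(P,Q)\) with \(A=\Delta\).

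Applying Theorem~\ref{thm:multi-rigidity} yields, for \(i=1,2\), a signed squarefree integer \(e_i\) and an integer \(s_i\) with \(U_{n_i}=e_is_i^2\) and \(|e_i|\mid\rad(\Delta)\). Renaming \(e_1,s_1\) as \(e_m,s_m\) and \(e_2,s_2\) as \(e_n,s_n\) gives the stated conclusion.

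For completeness, I would also note the mechanism in this case directly. By Lemma~\ref{lem:pairwise-coprime-terms} (equivalently Lemma~\ref{lem:strong-divisibility}), \(\gcd(U_m,U_n)=|U_1|=1\). So for each prime \(p\nmid\Delta\), the relation \(2v_p(y)=v_p(U_m)+v_p(U_n)\) has at most one nonzero summand, and each valuation is therefore even. This is Lemma~\ref{lem:valuation-parity-multi}. Every odd-valuation prime of \(U_m\) or \(U_n\) then divides \(\Delta\), which bounds the support of the signed squarefree parts.

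There is no real obstacle. The only points to check are that \(U_m,U_n\neq0\), which follows from nondegeneracy so that the squarefree parts are defined, and that \(\Delta\neq0\); both were verified earlier in the paper.
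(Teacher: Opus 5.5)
Your proposal is correct and takes the same approach as the paper, which states this corollary without separate proof as the specialisation $r=2$, $A=\Delta$ of Theorem~\ref{thm:multi-rigidity}; the proof of Corollary~\ref{cor:finite-graph} invokes that same specialisation explicitly. Your additional checks that $\Delta\neq0$ and $U_m,U_n\neq0$ simply confirm the theorem's hypotheses and are harmless.
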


\begin{remark}
	No primality assumption on $m$ or $n$ is used.
	The decisive hypothesis is the coprimality of the indices, which makes the corresponding Lucas terms coprime.
	This is why the result is naturally a theorem about coprime products rather than about prime indices.
\end{remark}

	\section{Square classes and exact criteria}
	
	The valuation-separation theorem gives necessary termwise restrictions.
	In order to describe the remaining finite problem, we also need the exact global square-class compatibility condition.
	This section records this condition separately, distinguishing the rational square-class obstruction from the additional integrality requirement.
	
	For a nonzero integer $N$, write
	\[
	N=\operatorname{sqf}(N)\,t^2,
	\]
	where $\operatorname{sqf}(N)$ is the signed squarefree part of $N$. Thus $\operatorname{sqf}(N)$ is squarefree up to sign and records the class of $N$ in $\mathbb Q^\times/(\mathbb Q^\times)^2$.
	
	For a finite set $S$ of rational primes, put
	\[
	\mathcal G_S
	:=\langle -1,\ p:p\in S\rangle
	\subset \mathbb Q^\times/(\mathbb Q^\times)^2.
	\]
	If $A\ne0$ is an integer, write
	\[
	\mathcal G_A:=\mathcal G_{\operatorname{Supp}(A)}.
	\]
	Equivalently,
	\[
	\mathcal G_A=\langle -1,\ p:p\mid A\rangle.
	\]
	Here and throughout, the notation \(p\mid A\) means \(p\mid |A|\).
	
	\begin{proposition}\label{prop:two-term-criterion}
		Let $m,n$ be positive integers.  The equation
		\[
		\Delta y^2=U_mU_n
		\]
		has a solution $y\in\mathbb Q^\times$ if and only if
		\[
		[U_m][U_n]=[\Delta]
		\qquad\text{in }\mathbb Q^\times/(\mathbb Q^\times)^2.
		\]
		It has a solution $y\in\mathbb Z$ if and only if, in addition,
		\[
		\frac{U_mU_n}{\Delta}\in \mathbb Z^2.
		\]
		Equivalently, $[U_m][U_n]=[\Delta]$ and the rational square $U_mU_n/\Delta$ is an integer square.
	\end{proposition}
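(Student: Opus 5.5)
The plan is to handle the two parts of Proposition~\ref{prop:two-term-criterion} separately, since both reduce to manipulating the single rational number
\[
R:=\frac{U_mU_n}{\Delta}\in\mathbb Q^\times .
\]
Here \(R\) is nonzero because nondegeneracy gives \(U_m,U_n\neq0\) (Section~2) and \(\Delta>0\).

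For the rational statement, the equation \(\Delta y^2=U_mU_n\) with \(y\in\mathbb Q^\times\) says exactly that \(R=y^2\) is a nonzero rational square. In other words, \([R]\) is trivial in \(\mathbb Q^\times/(\mathbb Q^\times)^2\). Since this group has exponent~\(2\), we have \([\Delta]^{-1}=[\Delta]\). Hence
\[
[R]=[U_m][U_n][\Delta]^{-1}=[U_m][U_n][\Delta],
\]
and this is trivial if and only if \([U_m][U_n]=[\Delta]\). Both directions are just this chain of equivalences read forwards and backwards. For the converse, triviality of \([R]\) produces \(y\in\mathbb Q^\times\) with \(y^2=R\), which is a solution.

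For the integral statement, a solution \(y\in\mathbb Z\) gives \(R=y^2\in\mathbb Z^2\). In particular \(R\) is a rational square, so the compatibility condition holds by the first part. Conversely, if \(R=t^2\) with \(t\in\mathbb Z\), then \(y=t\) solves the equation. Here \(t\neq0\) automatically because \(R\neq0\). I will also note the equivalent formulation: a rational square \(y^2\) lies in \(\mathbb Z\) precisely when \(y\in\mathbb Z\), since \(\mathbb Z\) is integrally closed. This matches the final sentence of the statement.

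There is no real obstacle. The only points needing care are that the sign is encoded in the square class (the class of \(-1\) is part of \(\mathcal G_\Delta\)), and that \(\mathbb Z^2\) denotes the set of integer squares rather than pairs. I will state both explicitly so that the equivalence \([U_m][U_n]=[\Delta]\Leftrightarrow R\in(\mathbb Q^\times)^2\) is unambiguous.
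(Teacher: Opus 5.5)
Your proposal is correct and takes essentially the same approach as the paper: both rewrite the equation as $y^2=U_mU_n/\Delta$, identify rational solvability with this nonzero quotient being a square in $\mathbb Q^\times$ (equivalently $[U_m][U_n]=[\Delta]$), and identify integral solvability with it being an integer square. Your additional remarks, that $U_mU_n\neq0$ by nondegeneracy and that $y^2\in\mathbb Z$ forces $y\in\mathbb Z$, make explicit two points the paper leaves implicit.
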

	
	\begin{proof}
		The equation is equivalent to
		\[
		y^2=\frac{U_mU_n}{\Delta}.
		\]
		It has a nonzero rational solution if and only if $U_mU_n/\Delta$ is a square
		in $\mathbb Q^\times$, which is exactly
		\[
		[U_m][U_n]=[\Delta].
		\]
		The solution is integral precisely when the rational square $U_mU_n/\Delta$ is an integer square.
	\end{proof}
	
	\begin{corollary}\label{cor:finite-graph}
	Suppose that \(\gcd(m,n)=1\) and
	\[
	\Delta y^2=U_mU_n
	\]
	for some \(y\in\mathbb Z\). Then
	\[
	[U_m],[U_n]\in G_\Delta,
	\qquad
	[U_m][U_n]=[\Delta].
	\]
	Thus the square classes of the two terms lie in the finite group \(G_\Delta\) and satisfy the finite compatibility condition determined by \([\Delta]\).
	\end{corollary}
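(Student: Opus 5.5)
The plan is to combine Corollary~\ref{cor:two-term-rigidity} with Proposition~\ref{prop:two-term-criterion}.

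First, membership in \(G_\Delta\). Since \(\gcd(m,n)=1\) and \(\Delta y^2=U_mU_n\) with \(y\in\mathbb Z\), Corollary~\ref{cor:two-term-rigidity} (the case \(A=\Delta\), \(r=2\) of Theorem~\ref{thm:multi-rigidity}) gives
\[
U_m=e_ms_m^2,\qquad U_n=e_ns_n^2,
\]
with \(e_m,e_n\) signed squarefree and \(|e_m|,|e_n|\mid\rad(\Delta)\). The class of \(U_m\) in \(\mathbb Q^\times/(\mathbb Q^\times)^2\) is \([e_m]\). Since \(e_m=\pm\prod_{p\mid e_m}p\) and every such \(p\) divides \(\Delta\), we get \([e_m]\in\langle -1,p:p\mid\Delta\rangle=G_\Delta\). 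The same argument gives \([U_n]\in G_\Delta\). Here \(U_m,U_n\neq0\) by nondegeneracy, and \(\Delta>0\), so all classes are well defined.

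Second, the compatibility relation. The integer \(y\) is nonzero and in particular is a rational solution. The rational direction of Proposition~\ref{prop:two-term-criterion} therefore gives \([U_m][U_n]=[\Delta]\). One can also see this directly: \(U_mU_n/\Delta=y^2\in(\mathbb Q^\times)^2\).

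Finally, finiteness. \(G_\Delta\) is generated by \(-1\) and the finitely many primes dividing \(\Delta\), and every element has order at most \(2\). Hence \(|G_\Delta|\le 2^{1+\omega(\Delta)}\). The admissible pairs \(([U_m],[U_n])\) are thus among the at most \(|G_\Delta|\) pairs \((c,c[\Delta])\) with \(c\in G_\Delta\).

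There is no real obstacle here. The only point needing care is that the class group notation \(G_\Delta\) matches \(\mathcal G_{\operatorname{Supp}(\Delta)}\), which holds by the definitions given in Section~4.
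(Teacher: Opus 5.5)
Your proposal is correct and follows the paper's proof. Membership \([U_m],[U_n]\in G_\Delta\) comes from the rigidity theorem with \(A=\Delta\), \(r=2\), and the relation \([U_m][U_n]=[\Delta]\) comes from the rational direction of the two-term square-class criterion; your explicit bound \(|G_\Delta|\le 2^{1+\omega(\Delta)}\) is a harmless addition that makes the word ``finite'' precise.
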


	\begin{proof}
		By Theorem~\ref{thm:multi-rigidity} with \(A=\Delta\) and \(r=2\), the coprimality
		assumption \(\gcd(m,n)=1\) implies
		\[
		[U_m],[U_n]\in G_\Delta.
		\]
		The identity
		\[
		[U_m][U_n]=[\Delta]
		\]
		follows from Proposition~\ref{prop:two-term-criterion}. This proves the claim.
	\end{proof}

	\begin{proposition}\label{prop:product-square}
		Let \(A\in \mathbb Z\setminus\{0\}\), and let
		\(n_1,\ldots,n_r\) be positive integers. Put
		\[
		B=\prod_{i=1}^r U_{n_i}(P,Q).
		\]
		Then the equation
		\[
		A y^2=B
		\]
		has a solution \(y\in\mathbb Q^\times\) if and only if
		\[
		\prod_{i=1}^r [U_{n_i}]=[A]
		\quad\text{in } \mathbb Q^\times/(\mathbb Q^\times)^2.
		\]
		It has a solution \(y\in\mathbb Z\) if and only if
		\[
		\prod_{i=1}^r [U_{n_i}]=[A]
		\]
		and, for every rational prime \(p\),
		\[
		\sum_{i=1}^r v_p(U_{n_i})-v_p(A)\in 2\mathbb Z_{\ge 0}.
		\]
		Equivalently, \(B/A\) is a non-negative integer square.
	\end{proposition}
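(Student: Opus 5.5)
The argument works directly with the nonzero rational number \(B/A\). Since the sequence is nondegenerate, \(U_{n_i}\neq 0\) for all \(n_i\ge 1\) (Section~2), so \(B\neq 0\) and \(B/A\in\mathbb Q^\times\). The equation \(Ay^2=B\) is equivalent to \(y^2=B/A\), and no solution can have \(y=0\).

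\emph{Rational part.} The map \(\mathbb Q^\times\to\mathbb Q^\times/(\mathbb Q^\times)^2\) is a homomorphism of multiplicative groups, and \([A]^{-1}=[A]\) because every class has order dividing \(2\). Hence
\[
[B/A]=[B][A]=\prod_{i=1}^r [U_{n_i}]\cdot[A].
\]
There is a \(y\in\mathbb Q^\times\) with \(y^2=B/A\) exactly when \([B/A]\) is trivial, which is the same as \(\prod_i[U_{n_i}]=[A]\). This is the argument of Proposition~\ref{prop:two-term-criterion} with \(\Delta\) replaced by \(A\) and two factors replaced by \(r\).

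\emph{Integral part.} Suppose first that \(y\in\mathbb Z\) solves \(Ay^2=B\). The rational condition follows from the first part. Also \(B/A=y^2\) is a non-negative integer square, so for every prime \(p\),
\[
v_p(B/A)=\sum_i v_p(U_{n_i})-v_p(A)=2v_p(y)\in 2\mathbb Z_{\ge0}.
\]

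Conversely, assume both conditions hold. The valuation condition gives \(v_p(B/A)\ge 0\) for every \(p\), so \(B/A\in\mathbb Z\setminus\{0\}\), and it gives that every valuation of \(B/A\) is even. Thus \(|B/A|=t^2\) with \(t=\prod_p p^{v_p(B/A)/2}\in\mathbb Z\).

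The one point needing care is the sign, since valuations do not see it. Here the square-class condition is used: it says \(B/A\) is a rational square, hence positive. So \(B/A=t^2\), and \(y=t\) is an integer solution.

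\emph{Equivalence with the last sentence.} The two conditions together say precisely that \(B/A\) is a rational square with non-negative even valuation at every prime. That is the same as saying \(B/A\) is the square of a nonzero integer, i.e. a non-negative (indeed positive) integer square, which is the final sentence of the proposition.
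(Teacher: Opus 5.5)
Your proof is correct and follows essentially the same route as the paper: reduce to \(y^2=B/A\), get the rational criterion from multiplicativity of square classes, and characterize integer squares by non-negative even valuations at every prime. Your explicit remark that valuations only give \(|B/A|=t^2\), so the square-class condition is what fixes the sign, makes precise a point the paper's proof leaves implicit.
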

	
	\begin{proof}
		The equation \(Ay^2=B\) is equivalent to
		\[
		y^2=\frac{B}{A}.
		\]
		Thus it has a solution \(y\in\mathbb Q^\times\) if and only if
		\(B/A\) is a square in \(\mathbb Q^\times\). Since
		\[
		[B]=\prod_{i=1}^r [U_{n_i}]
		\qquad\text{in }\mathbb Q^\times/(\mathbb Q^\times)^2,
		\]
		this is equivalent to
		\[
		\prod_{i=1}^r [U_{n_i}]=[A].
		\]
		
		For integral solutions, we need in addition that the rational square \(B/A\) be an integer square. Equivalently, for every rational prime
		\(p\),
		\[
		v_p(B)-v_p(A)
		=
		\sum_{i=1}^r v_p(U_{n_i})-v_p(A)
		\in 2\mathbb Z_{\ge0}.
		\]
		This proves the claimed criterion.
	\end{proof}
	
	\begin{corollary}\label{cor:valuation}
		Let \(A\in \mathbb Z\setminus\{0\}\), and let
		\(n_1,\ldots,n_r\) be pairwise coprime positive integers. Suppose that
		\[
		Ay^2=\prod_{i=1}^r U_{n_i}(P,Q)
		\]
		has a solution \(y\in\mathbb Z\). Then:
		\begin{enumerate}
			\item if \(p\nmid A\), then
			\[
			v_p(U_{n_i})\equiv 0 \pmod 2
			\qquad (1\le i\le r);
			\]
			\item if \(p\mid A\), then \(p\) divides exactly one of the terms \(U_{n_i}\), say
			\(U_{n_j}\), and
			\[
			v_p(U_{n_j})\equiv v_p(A)\pmod 2,
			\qquad
			v_p(U_{n_j})\ge v_p(A).
			\]
		\end{enumerate}
	\end{corollary}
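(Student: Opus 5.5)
Part~(1) needs no new argument: it is exactly Lemma~\ref{lem:valuation-parity-multi}, which applies verbatim because the indices are pairwise coprime and \(y\in\mathbb Z\). So the only work is part~(2). For that I will again take \(p\)-adic valuations in the equation, now for a prime \(p\mid A\), and combine this with the pairwise coprimality of the Lucas terms from Lemma~\ref{lem:pairwise-coprime-terms}.

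Fix a prime \(p\mid A\). By nondegeneracy every \(U_{n_i}\) is nonzero, so \(y\neq0\) and all valuations below are finite. Taking \(v_p\) of \(Ay^2=\prod_i U_{n_i}\) gives
\[
v_p(A)+2v_p(y)=\sum_{i=1}^r v_p(U_{n_i}).
\]
The left-hand side is at least \(v_p(A)\ge1\), since \(y\in\mathbb Z\) gives \(v_p(y)\ge0\). Hence \(p\) divides the product, and therefore divides at least one factor \(U_{n_j}\). By Lemma~\ref{lem:pairwise-coprime-terms} the terms \(U_{n_i}\) are pairwise coprime, so \(p\) divides no other factor. This is the ``exactly one'' claim.

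With \(v_p(U_{n_i})=0\) for all \(i\ne j\), the valuation identity collapses to
\[
v_p(U_{n_j})=v_p(A)+2v_p(y).
\]
Both assertions of~(2) follow at once: the parity congruence \(v_p(U_{n_j})\equiv v_p(A)\pmod 2\), and the inequality \(v_p(U_{n_j})\ge v_p(A)\), which again uses \(v_p(y)\ge0\).

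There is no real obstacle here. The one point needing care is that integrality of \(y\) is used essentially, both to force \(p\) to divide some factor and to obtain the inequality. For merely rational \(y\) the conclusion of~(2) can fail, which is consistent with the rational/integral distinction drawn in Proposition~\ref{prop:product-square}.
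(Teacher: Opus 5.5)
Your proof is correct and matches the paper's argument. The paper likewise takes $p$-adic valuations, uses Lemma~\ref{lem:pairwise-coprime-terms} to isolate a single factor $U_{n_j}$, and reads off both claims in~(2) from $v_p(U_{n_j})-v_p(A)=2v_p(y)$ with $v_p(y)\ge 0$. The only cosmetic difference is in part~(1): you cite Lemma~\ref{lem:valuation-parity-multi}, while the paper repeats that one-line parity argument inline.
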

	
	\begin{proof}
		By Lemma~\ref{lem:pairwise-coprime-terms}, the integers
		\[
		U_{n_1},\ldots,U_{n_r}
		\]
		are pairwise coprime. Hence a fixed rational prime \(p\) can divide at most one of them. Taking \(p\)-adic valuations in the equation gives
		\[
		\sum_{i=1}^r v_p(U_{n_i})-v_p(A)=2v_p(y)\in 2\mathbb Z_{\ge 0}.
		\]
		
		If \(p\nmid A\), then \(v_p(A)=0\). Since \(p\) divides at most one factor \(U_{n_i}\), each individual valuation \(v_p(U_{n_i})\) must be even.
		
		If \(p\mid A\), then \(v_p(A)>0\). The displayed equality implies
		\[
		\sum_{i=1}^r v_p(U_{n_i})\ge v_p(A),
		\]
		so \(p\) must divide at least one of the terms \(U_{n_i}\). Since the terms are pairwise coprime, it divides exactly one of them, say \(U_{n_j}\). The valuation identity then becomes
		\[
		v_p(U_{n_j})-v_p(A)=2v_p(y),
		\]
		which proves both the congruence and the inequality.
	\end{proof}
	
	\section{The abc input and $S$-square Lucas terms}
	
	In this section, we isolate the only conditional input used in the paper. The product equation will no longer appear.  We prove that, assuming the number-field \(abc\) conjecture over \(K=\mathbb Q(\sqrt{\Delta})\), a Lucas sequence contains only finitely many terms whose squarefree part is supported on a prescribed finite set of rational primes. In later sections this result is applied after the unconditional valuation-separation argument has forced precisely such an \(S\)-supported condition.
	
	Let
	\[
	K=\mathbb Q(\sqrt\Delta).
	\]
	Under our standing assumptions, \(\Delta\) is not a square in \(\mathbb Q\). Indeed, if
	\(\sqrt\Delta\in\mathbb Q\), then \(\alpha\) and \(\beta\) would be rational algebraic
	integers. Since
	\[
	\alpha\beta=-Q=\pm1,
	\]
	they would be units in \(\mathbb Z\), and hence \(\alpha,\beta\in\{\pm1\}\). This would make \(\alpha/\beta\) a root of unity, contrary to nondegeneracy. Thus \(K\) is a real quadratic field.
	
	All absolute values used below are normalised so as to satisfy the product formula.
	For a nonzero prime ideal \(\mathfrak p\subset \mathcal O_K\), write
	\[
	N\mathfrak p=|\mathcal O_K/\mathfrak p|.
	\]
	
	For a nonzero triple $(a,b,c)\in K^3$ satisfying $a+b=c$, define
	\[
	\rad_K(a:b:c)
	=\frac1{[K:\mathbb Q]}
	\sum_{\mathfrak p\in I(a,b,c)}\log N\mathfrak p,
	\]
	where $I(a,b,c)$ is the set of nonzero prime ideals $\mathfrak p$ for which
	$v_{\mathfrak p}(a),v_{\mathfrak p}(b),v_{\mathfrak p}(c)$ are not all equal.
	We use the absolute logarithmic projective height
	\[
	h(a:b:c)=\frac1{[K:\mathbb Q]}
	\sum_v \log\max\{|a|_v,|b|_v,|c|_v\},
	\]
	where the sum is over all places of $K$. This is the standard projective-height formulation used in Diophantine geometry;
	see, for example, Lang~\cite{Lang1983}.
	
	\begin{conjecture}[abc over $K$]\label{conj:abc}
		For every $\varepsilon>0$ there exists a constant $C_{K,\varepsilon}$ such that for all nonzero triples $(a,b,c)\in K^3$ satisfying $a+b=c$, one has
		\[
		h(a:b:c)
		\le (1+\varepsilon)\rad_K(a:b:c)+C_{K,\varepsilon}.
		\]
	\end{conjecture}
	
	\begin{lemma}\label{lem:units}
		The elements $\alpha$ and $\beta$ are units in $\mathcal O_K$.
	\end{lemma}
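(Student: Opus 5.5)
We have to show that \(\alpha\) and \(\beta\) are units in \(\mathcal O_K\). The plan has two steps: first show they are algebraic integers lying in \(K\), then use their product relation.

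\emph{Integrality and membership in \(K\).} The elements \(\alpha,\beta\) are the roots of the monic polynomial \(x^2-Px-Q\in\mathbb Z[x]\), so both are algebraic integers. From \(\alpha+\beta=P\) and \(\alpha-\beta=\sqrt\Delta\) we get
\[
\alpha=\frac{P+\sqrt\Delta}{2},\qquad \beta=\frac{P-\sqrt\Delta}{2}.
\]
Hence both lie in \(K=\mathbb Q(\sqrt\Delta)\). Being integral over \(\mathbb Z\), they belong to \(\mathcal O_K\).

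\emph{Unit property.} Since \(\alpha\beta=-Q=\pm1\), we have \(\alpha^{-1}=-Q\beta\), using \(Q^2=1\). Likewise \(\beta^{-1}=-Q\alpha\). Both inverses are again elements of \(\mathcal O_K\), so \(\alpha\) and \(\beta\) are units.

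There is no real obstacle here. The only point needing care is that \(\alpha,\beta\) lie in \(K\) and not merely in some extension of it. This follows from the explicit expressions above, together with the earlier observation that \(\Delta\) is not a rational square, so that \(K\) is a genuine quadratic field.
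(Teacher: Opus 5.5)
Your proof is correct and is essentially the paper's argument. The paper notes that $\alpha,\beta$ are algebraic integers with $N_{K/\mathbb Q}(\alpha)=N_{K/\mathbb Q}(\beta)=\alpha\beta=-Q=\pm1$, which is the same fact as your explicit inverses $\alpha^{-1}=-Q\beta$ and $\beta^{-1}=-Q\alpha$. Your explicit check that $\alpha,\beta\in K$ adds a little care that the paper leaves implicit, and it does not actually need $\Delta$ to be a non-square.
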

	
	\begin{proof}
		They are roots of the monic polynomial $x^2-Px-Q\in\mathbb Z[x]$, hence are
		algebraic integers.  Moreover
		\[
		N_{K/\mathbb Q}(\alpha)=\alpha\beta=-Q=\pm1,
		\qquad
		N_{K/\mathbb Q}(\beta)=\alpha\beta=-Q=\pm1.
		\]
		Thus both are units.
	\end{proof}
	
	\begin{lemma}\label{lem:S-square-growth}
		Let $S$ be a finite set of rational primes.  Suppose that
		\[
		U_n=e s^2,
		\]
		where $e$ is signed squarefree and $\operatorname{Supp}(e)\subseteq S$.  Then
		\[
		\log |s|=\frac n2\log|\alpha|+O_{P,Q,S}(1).
		\]
	\end{lemma}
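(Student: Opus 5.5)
The plan is to take logarithms in $U_n=es^2$ and compare with Lemma~\ref{lem:growth}. Since $U_n\neq0$ by nondegeneracy, both $e$ and $s$ are nonzero, and
\[
\log|U_n|=\log|e|+2\log|s|.
\]
By Lemma~\ref{lem:growth}, $\log|U_n|=n\log|\alpha|+O_{P,Q}(1)$. It then suffices to show that $\log|e|$ is bounded in terms of $S$ alone.

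The bound on $e$ is immediate. Since $e$ is signed squarefree with $\operatorname{Supp}(e)\subseteq S$, we have $|e|\mid\prod_{p\in S}p$. Hence
\[
0\le\log|e|\le\sum_{p\in S}\log p=O_S(1).
\]

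Combining the two displays gives
\[
2\log|s|=n\log|\alpha|+O_{P,Q}(1)-\log|e|=n\log|\alpha|+O_{P,Q,S}(1).
\]
Dividing by $2$ yields the claim.

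There is no real obstacle here. The only point needing care is that the error term in Lemma~\ref{lem:growth} is uniform for all $n\ge1$, including the finitely many small $n$ absorbed there. That lemma states exactly this, so the implied constant is uniform in $n$ and depends only on $P,Q,S$.
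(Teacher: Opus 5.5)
Your proposal is correct and follows essentially the same route as the paper: write $\log|U_n|=\log|e|+2\log|s|$, note that $\log|e|=O_S(1)$ because $|e|$ divides $\prod_{p\in S}p$ (the paper phrases this as $|e|$ lying in a finite set depending only on $S$), and conclude via Lemma~\ref{lem:growth}. Your remark that the error term in Lemma~\ref{lem:growth} is uniform over all $n\ge1$ is exactly what makes the final constant depend only on $P,Q,S$.
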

	
	\begin{proof}
		Since $e$ is signed squarefree and supported on the fixed finite set $S$, its absolute value belongs to a finite set depending only on $S$. Hence
		\[
		\log|U_n|=\log|e|+2\log|s|=2\log|s|+O_S(1).
		\]
		The result follows from Lemma \ref{lem:growth}.
	\end{proof}
	
	\begin{lemma}\label{lem:height-lower}
		There exists a constant $C_1=C_1(P,Q)$ such that, for every representation
		$U_n=e s^2$ with $e\ne0$, one has
		\[
		h(\beta^n:\sqrt\Delta\,e s^2:\alpha^n)
		\ge n\log|\alpha|-C_1.
		\]
		In fact, with the above normalisation, one may take $C_1=0$.
	\end{lemma}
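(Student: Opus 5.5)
Since the height is projective, I would compare it with a single coordinate and bound it below by the contribution of the archimedean places alone. The triple $(\beta^n,\sqrt\Delta\,e s^2,\alpha^n)$ satisfies $\beta^n+\sqrt\Delta\,U_n=\alpha^n$ by Binet's formula, with $U_n=es^2$. By Lemma~\ref{lem:units} all three coordinates are nonzero elements of $\mathcal O_K$ (the middle one is nonzero by nondegeneracy), since $\sqrt\Delta=\alpha-\beta\in\mathcal O_K$.

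\emph{Non-archimedean places.} For every finite place $v$ we have $|\alpha^n|_v=1$, because $\alpha$ is a unit. The other two coordinates are algebraic integers, so their $v$-adic absolute values are at most $1$. Hence
\[
\log\max\{|\beta^n|_v,|\sqrt\Delta\,es^2|_v,|\alpha^n|_v\}=0
\]
for every finite place, and these places contribute exactly $0$ to $h$. (Equivalently, one may first divide the triple by $\alpha^n$, which does not change the projective height; then every finite place contributes $\ge 0$.)

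\emph{Archimedean places.} $K$ is real quadratic, so there are two real embeddings, each with local degree $1$. Under the identity embedding, $|\alpha^n|=|\alpha|^n$. Under the conjugate embedding, $\alpha$ is sent to $\beta$ (the conjugate of $\alpha$ is the other root of $x^2-Px-Q$). That embedding sends the triple to $(\alpha^n,-\sqrt\Delta\,es^2,\beta^n)$, whose maximum is at least $|\alpha|^n$. Each of the two archimedean places therefore contributes at least $n\log|\alpha|$. Summing and dividing by $[K:\mathbb Q]=2$ gives
\[
h\ge \tfrac12\cdot 2n\log|\alpha| = n\log|\alpha|,
\]
so $C_1=0$ works.

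The only point that needs care is the normalisation convention: whether the absolute values are the normalised ones $|\cdot|_v^{n_v}$ with the factor $1/[K:\mathbb Q]$ in front, as in the displayed definition of $h$. I would check that with this convention the real places each have weight $1$, so the averaging over the two places produces exactly $n\log|\alpha|$. If one instead uses a convention in which the archimedean weights differ, the bound still holds up to an $O(1)$ error, which yields a constant $C_1(P,Q)$. Otherwise the argument is routine, and relies on Lemma~\ref{lem:growth} only for the relabelling $|\alpha|>1$.
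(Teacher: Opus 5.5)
Your proposal is correct and is essentially the paper's argument: the two real embeddings swap $\alpha$ and $\beta$, so each archimedean place contributes at least $n\log|\alpha|$, and dividing by $[K:\mathbb Q]=2$ gives $C_1=0$. The one small refinement is that you show the finite places contribute exactly $0$ (because $\alpha^n$ is a unit and the other two coordinates are integral), whereas the paper only uses that these contributions are nonnegative.
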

	
	\begin{proof}
		Let
		\[
		a=\beta^n,
		\qquad
		b=\sqrt\Delta\,e s^2,
		\qquad
		c=\alpha^n.
		\]
		The two real embeddings of $K$ interchange $\alpha$ and $\beta$.
		At one embedding, $|c|=|\alpha|^n$; at the conjugate embedding, $|a|=|\alpha|^n$.
		Thus each archimedean contribution is at least $n\log|\alpha|$.
		Dividing by $[K:\mathbb Q]=2$ gives the claim, since the nonarchimedean contributions to the projective height are nonnegative.
	\end{proof}
	
	\begin{lemma}\label{lem:radical-upper-S}
		Let $S$ be a finite set of rational primes.  There exists a constant
		$C_2=C_2(P,Q,S)$ such that, whenever
		\[
		U_n=e s^2,
		\qquad
		e\text{ signed squarefree},
		\qquad
		\operatorname{Supp}(e)\subseteq S,
		\]
		one has
		\[
		\rad_K(\beta^n:\sqrt\Delta\,e s^2:\alpha^n)
		\le \log|s|+C_2.
		\]
	\end{lemma}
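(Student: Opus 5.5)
We have to bound the radical of the triple \((a,b,c)=(\beta^n,\sqrt\Delta\,es^2,\alpha^n)\), which satisfies \(a+b=c\) by Binet's formula \(\alpha^n-\beta^n=\sqrt\Delta\,U_n\). The plan is to determine which prime ideals can lie in \(I(a,b,c)\) and then sum their norms.

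\emph{Step 1: the units contribute nothing.} By Lemma~\ref{lem:units}, \(\alpha\) and \(\beta\) are units in \(\mathcal O_K\). Hence for every nonzero prime ideal \(\mathfrak p\) we have \(v_{\mathfrak p}(a)=v_{\mathfrak p}(c)=0\). A prime \(\mathfrak p\) therefore lies in \(I(a,b,c)\) if and only if \(v_{\mathfrak p}(b)\neq0\). Since \(b\) is an algebraic integer, this happens if and only if \(\mathfrak p\mid b\). As \(b=\sqrt\Delta\,e\,s^2\), every such \(\mathfrak p\) divides \(\sqrt\Delta\), or divides \(e\), or divides \(s\).

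\emph{Step 2: split the sum.} Write \(T\) for the set of rational primes dividing \(\Delta\), together with those in \(S\). The primes \(\mathfrak p\) dividing \(\sqrt\Delta\) or \(e\) lie above primes in \(T\), and \(T\) depends only on \(P,Q,S\). Their total contribution is therefore at most
\[
\frac12\sum_{p\in T}\ \sum_{\mathfrak p\mid p}\log N\mathfrak p\le \sum_{p\in T}\log p,
\]
which is a constant and goes into \(C_2\).

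The remaining primes satisfy \(\mathfrak p\mid s\), and each lies above a rational prime \(p\mid s\). For a fixed \(p\) we have \(\sum_{\mathfrak p\mid p}\log N\mathfrak p\le [K:\mathbb Q]\log p=2\log p\), since \(\sum_{\mathfrak p\mid p}e_{\mathfrak p}f_{\mathfrak p}=2\). So, after the normalising factor \(1/[K:\mathbb Q]\), the contribution of these primes is at most
\[
\sum_{p\mid s}\log p=\log\rad(s)\le\log|s|.
\]
Note that \(s\neq0\) because \(U_n\neq0\). Adding the two parts gives \(\rad_K\le\log|s|+C_2\) with \(C_2=\sum_{p\in T}\log p\).

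\emph{Main obstacle.} The only delicate point is the normalisation. We must make sure that the factor \(1/[K:\mathbb Q]\) exactly cancels the degree sum \(\sum_{\mathfrak p\mid p}\log N\mathfrak p\le 2\log p\). This ensures the coefficient of \(\log|s|\) is \(1\) and not \(2\), and that the squaring in \(s^2\) does not affect the radical. Overlaps between primes dividing \(s\) and primes in \(T\) only cause overcounting, which is harmless for an upper bound.
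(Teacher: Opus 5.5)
Your proof is correct and follows the paper's argument: the units $\alpha^n,\beta^n$ contribute nothing, the primes above $S\cup\operatorname{Supp}(\Delta)$ give a bounded contribution, and the primes dividing $(s)$ contribute at most $\log|s|$ after the normalising factor $1/[K:\mathbb Q]$. The only cosmetic difference is that you go through rational primes and use $\log\rad(s)\le\log|s|$, whereas the paper bounds the sum over $\mathfrak p\mid(s)$ directly by $\frac12\log N_{K/\mathbb Q}((s))=\log|s|$.
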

	
	\begin{proof}
		Put
		\[
		a=\beta^n,\qquad b=\sqrt\Delta\, e s^2,\qquad c=\alpha^n.
		\]
		By Lemma~\ref{lem:units}, \(a=\beta^n\) and \(c=\alpha^n\) are units of \(\mathcal O_K\).
		
		Since
		\[
		(b)=(\sqrt{\Delta}e)(s)^2,
		\]
		the prime ideals dividing \((\sqrt{\Delta}e)\) lie above the fixed finite set of rational primes \(S\cup \operatorname{Supp}(\Delta)\).
		Their total contribution is bounded by a constant depending only on \(P,Q\), and \(S\). The remaining contributing prime ideals form a subset of the prime ideals dividing \((s)\), up to the fixed set already accounted for.
		
		Since \(s\in\mathbb Z\),
		\[
		N_{K/\mathbb Q}((s))=|s|^{[K:\mathbb Q]}.
		\]
		Therefore
		\[
		\frac1{[K:\mathbb Q]}\sum_{\mathfrak p\mid (s)}\log N\mathfrak p
		\le
		\frac1{[K:\mathbb Q]}\log N_{K/\mathbb Q}((s))
		=
		\log |s|.
		\]
		Combining the bounded contribution from \((\sqrt{\Delta}e)\) with this estimate gives
		\[
		\operatorname{rad}_K(a:b:c)\le \log |s|+C_2(P,Q,S).
		\]
	\end{proof}
	
	\begin{theorem}[Theorem~\ref{thm:conditional}]\label{thm:S-square-finiteness}
		Let \(P,Q\in\mathbb Z\) with \(Q=\pm1\), and suppose that \(\Delta=P^2+4Q>0\) and that \(U_n(P,Q)\) is nondegenerate.
		Assume the number-field abc conjecture over \(K=\mathbb Q(\sqrt\Delta)\).
		Then, for every finite set \(S\) of rational primes, the set of \(n\ge 1\) such that
		\[
		[U_n(P,Q)]\in G_S
		\]
		is finite.
	\end{theorem}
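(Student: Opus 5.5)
The plan is to apply Conjecture~\ref{conj:abc} to the $S$-unit-type identity coming from Binet's formula. Suppose $[U_n]\in G_S$, so $U_n=es^2$ with $e$ signed squarefree and $\operatorname{Supp}(e)\subseteq S$. Since $\alpha-\beta=\sqrt\Delta$, Binet's formula gives
\[
\beta^n+\sqrt\Delta\,e s^2=\alpha^n ,
\]
a relation $a+b=c$ in $K$ with $a=\beta^n$, $b=\sqrt\Delta\,es^2$, $c=\alpha^n$. All three entries are nonzero: $a,c$ are units by Lemma~\ref{lem:units}, and $b\neq0$ by nondegeneracy. We may therefore apply the conjecture with a fixed $\varepsilon$, say $\varepsilon=1/2$.

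The key steps are as follows.
\begin{enumerate}
\item Bound the height from below using Lemma~\ref{lem:height-lower}:
\[
h(a:b:c)\ge n\log|\alpha|-C_1 .
\]
\item Bound the radical from above using Lemma~\ref{lem:radical-upper-S}, and then Lemma~\ref{lem:S-square-growth}:
\[
\rad_K(a:b:c)\le \log|s|+C_2=\tfrac n2\log|\alpha|+O_{P,Q,S}(1).
\]
\item Insert both bounds into Conjecture~\ref{conj:abc}. This yields
\[
n\log|\alpha|\le (1+\varepsilon)\tfrac n2\log|\alpha|+O_{P,Q,S,\varepsilon}(1).
\]
\item With $\varepsilon=1/2$, this becomes $\tfrac n4\log|\alpha|\le O(1)$. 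Since $|\alpha|>1$ by Lemma~\ref{lem:growth}, it bounds $n$, which proves finiteness.
\end{enumerate}

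Concretely, the radical bound comes from the facts that $a$ and $c$ are units and that $(b)=(\sqrt\Delta e)(s)^2$. Hence the only primes in $I(a,b,c)$ are those dividing $(s)$, together with finitely many primes above $S\cup\operatorname{Supp}(\Delta)$. Here the square is essential: the radical sees $s$ only once, while $b$ has size $s^2$. This saving of a factor of $2$ is exactly what beats the $1+\varepsilon$ in the conjecture.

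The main obstacle is keeping the constants honest. The bounded contribution from primes above $S\cup\operatorname{Supp}(\Delta)$ must depend only on $P,Q,S$ and not on $n$. This holds because there are finitely many such prime ideals, each counted once with weight $\log N\mathfrak p$. The error term in Lemma~\ref{lem:S-square-growth} must likewise be uniform in $n$; Lemma~\ref{lem:growth} supplies this. I would also check that the height normalisation in Lemma~\ref{lem:height-lower} matches the one in the conjecture. With the factor $1/[K:\mathbb Q]$ in both the height and the radical, the comparison is consistent: the height carries main term $n\log|\alpha|$ and the radical carries main term $\tfrac n2\log|\alpha|$. The equivalence with the formulation $U_n=es^2$ is immediate, since $[U_n]\in G_S$ means exactly that the signed squarefree part of $U_n$ is supported on $S$.
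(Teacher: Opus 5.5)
Your proposal is correct and follows the paper's proof essentially step for step. It uses the same Binet triple $(\beta^n,\sqrt\Delta\,es^2,\alpha^n)$, the height lower bound of Lemma~\ref{lem:height-lower}, and the radical bound of Lemma~\ref{lem:radical-upper-S} combined with Lemma~\ref{lem:S-square-growth}, arriving at $\tfrac{1-\varepsilon}{2}\,n\log|\alpha|\le O(1)$; the only differences are cosmetic: you fix $\varepsilon=1/2$ instead of an arbitrary $0<\varepsilon<1$, and you make explicit the nonvanishing of $b$ and the consistency of normalisations, which the paper leaves implicit.
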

	
	\begin{proof}
		The condition $[U_n]\in\mathcal G_S$ is equivalent to a representation
		\[
		U_n=e s^2,
		\]
		where $e$ is signed squarefree and supported on $S$.  Binet's formula gives
		\[
		\alpha^n-\beta^n=\sqrt\Delta\,U_n
		=\sqrt\Delta\,e s^2,
		\]
		or
		\[
		\beta^n+\sqrt\Delta\,e s^2=\alpha^n.
		\]
		Apply Conjecture \ref{conj:abc} to the triple
		\[
		(\beta^n,\sqrt\Delta\,e s^2,\alpha^n).
		\]
		Fix $0<\varepsilon<1$.  Then
		\[
		h(\beta^n:\sqrt\Delta\,e s^2:\alpha^n)
		\le
		(1+\varepsilon)
		\rad_K(\beta^n:\sqrt\Delta\,e s^2:\alpha^n)
		+C_{K,\varepsilon}.
		\]
		By Lemmas~\ref{lem:height-lower} and~\ref{lem:radical-upper-S}, abc gives
		\[
		n\log|\alpha|
		\le
		(1+\varepsilon)(\log|s|+C_2)+C_{K,\varepsilon}.
		\]
		By Lemma~\ref{lem:S-square-growth},
		\[
		\log|s|=\frac n2\log|\alpha|+O_{P,Q,S}(1).
		\]
		Hence
		\[
		n\log|\alpha|
		\le
		\frac{1+\varepsilon}{2}n\log|\alpha|
		+O_{P,Q,S,K,\varepsilon}(1).
		\]
		Equivalently,
		\[
		\frac{1-\varepsilon}{2}\,n\log|\alpha|
		\le
		O_{P,Q,S,K,\varepsilon}(1).
		\]
		Choosing \(0<\varepsilon<1\), and using \(\log|\alpha|>0\), we conclude that \(n\) is bounded in terms of \(P,Q,S,K\) and \(\varepsilon\). This proves the theorem.
	\end{proof}
	
	This proves Theorem~\ref{thm:conditional}.
	
	\begin{remark}
		Theorem \ref{thm:S-square-finiteness} is independent of any product equation.
		It says that, conditional on abc, a nondegenerate Lucas sequence with $Q=\pm1$ contains only finitely many terms whose signed squarefree parts are supported on a prescribed finite set.
		The coprime product equations studied here provide a natural mechanism forcing precisely this $S$-square condition.
	\end{remark}
	
	\section{Finite reduction of coprime product equations}
	
	We now assemble the two ingredients of the paper.
	The valuation-separation principle forces every factor in a coprime product solution to lie in a fixed square-class set \(G_A\).  The \(abc\)-conditional finiteness theorem then shows that only finitely many nontrivial indices can satisfy this condition.  Hence the original Diophantine problem becomes a finite compatibility problem.
	
	We shall also use the following standard terminology. A rational prime \(p\) is called a primitive prime divisor of \(U_n\) if
	\[
	p \mid U_n \quad \text{and} \quad p\nmid U_m \ (1\le m<n).
	\]
	For any rational prime \(p\) dividing at least one term of the sequence, define
	\[
	z(p):=\min\{n\ge1:p\mid U_n\}.
	\]
	If \(p\) is a primitive prime divisor of \(U_n\), then necessarily \(z(p)=n\).
	Since \(Q=\pm1\), no rational prime divides \(Q\), and the usual rank-of-apparition formalism for Lucas sequences applies to the primes considered below.
	
	\begin{theorem}\label{thm:multi-bound}
		Assume Conjecture \ref{conj:abc}.  Fix $A\in\mathbb Z\setminus\{0\}$ and
		$r\ge1$.  There exists a constant $C=C(P,Q,A)$ such that, if
		$n_1, \ldots,n_r$ are pairwise coprime positive integers and
		\[
		A y^2=\prod_{i=1}^r U_{n_i}(P,Q)
		\]
		for some $y\in\mathbb Z$, then
		\[
		\max_{1\le i\le r} n_i<C.
		\]
	\end{theorem}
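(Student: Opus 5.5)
The bound comes from combining the unconditional rigidity theorem with the conditional finiteness theorem, applied to the fixed set \(S=\operatorname{Supp}(A)\).

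First, suppose \(n_1,\ldots,n_r\) are pairwise coprime and \(Ay^2=\prod_i U_{n_i}\) with \(y\in\mathbb Z\). By Theorem~\ref{thm:multi-rigidity}, each factor satisfies \(U_{n_i}=e_is_i^2\) with \(e_i\) signed squarefree and \(|e_i|\mid\rad(A)\). In the notation of Section~4 this says \([U_{n_i}]\in\mathcal G_A=\mathcal G_S\) for every \(i\). Only the pairwise coprimality of the indices is used here, through Lemma~\ref{lem:pairwise-coprime-terms} and Lemma~\ref{lem:valuation-parity-multi}; the number \(r\) of factors and the specific tuple play no role.

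Second, I apply Theorem~\ref{thm:S-square-finiteness} with this \(S\). Under Conjecture~\ref{conj:abc} over \(K=\mathbb Q(\sqrt\Delta)\), it shows that
\[
T=\{n\ge1:[U_n(P,Q)]\in\mathcal G_S\}
\]
is finite. The set \(T\) contains \(1\), since \(U_1=1\), so it is nonempty. Moreover it depends only on \(P,Q\) and \(S\), hence only on \(P,Q,A\). I then set \(C=C(P,Q,A)=1+\max T\). Every index \(n_i\) of every solution lies in \(T\), so \(\max_i n_i<C\), uniformly in \(r\) and in the particular tuple.

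There is no real obstacle in this argument. The only point needing care is that the constant must be independent of the solution. This holds because \(T\) is defined before any solution is considered, and the finiteness proof bounds \(n\) by a quantity depending on \(P,Q,S,K\) and a fixed choice of \(\varepsilon\), say \(\varepsilon=1/2\). Since \(K\) is determined by \(P,Q\), the resulting bound depends only on \(P,Q,A\).
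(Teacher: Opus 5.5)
Your proposal is correct and follows the paper's proof: Theorem~\ref{thm:multi-rigidity} forces $[U_{n_i}]\in\mathcal G_A$ for every factor, and Theorem~\ref{thm:S-square-finiteness} with $S=\operatorname{Supp}(A)$ then bounds every index uniformly. Your explicit choice $C=1+\max T$ and your remark that the bound does not depend on $r$ or on the particular solution only spell out what the paper leaves implicit.
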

	
	\begin{proof}
		By Theorem~\ref{thm:multi-rigidity}, each term $U_{n_i}$ has signed squarefree part supported  $\operatorname{Supp}(A)$.  Apply Theorem~\ref{thm:S-square-finiteness} with $S=\operatorname{Supp}(A)$.
		This gives a uniform bound for every $n_i$.
	\end{proof}
	
	\begin{corollary}\label{cor:two-term-bound}
		Assume Conjecture~\ref{conj:abc}. Then there exists a constant $C(P,Q)$ such that, whenever
		\[
		\Delta y^2=U_m(P,Q)U_n(P,Q),
		\qquad
		\gcd(m,n)=1,
		\]
		one has
		\[
		\max\{m,n\}<C(P,Q).
		\]
		Consequently, only finitely many coprime ordered pairs $(m,n)$ can occur.
	\end{corollary}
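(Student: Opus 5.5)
This is Theorem~\ref{thm:multi-bound} specialised to $A=\Delta$ and $r=2$, so the plan is to derive it directly from that theorem. Suppose $\Delta y^2=U_mU_n$ with $\gcd(m,n)=1$ and $y\in\mathbb Z$. The pair $(n_1,n_2)=(m,n)$ is a pairwise coprime pair of positive integers. Hence Theorem~\ref{thm:multi-bound} applies with $A=\Delta$, which is a fixed nonzero integer depending only on $P,Q$, and gives $\max\{m,n\}<C(P,Q,\Delta)$. Since $\Delta=P^2+4Q$, this constant depends only on $P$ and $Q$; we write it $C(P,Q)$.

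If one prefers to avoid citing Theorem~\ref{thm:multi-bound} and go back to its ingredients, the route is as follows. First, Corollary~\ref{cor:two-term-rigidity} gives $U_m=e_ms_m^2$ and $U_n=e_ns_n^2$ with $|e_m|,|e_n|\mid\rad(\Delta)$. In other words, $[U_m],[U_n]\in G_S$ with $S=\operatorname{Supp}(\Delta)$. Second, Theorem~\ref{thm:S-square-finiteness}, applied with this $S$, gives a finite set $F$ of admissible indices. Taking $C(P,Q)=1+\max F$ (or $C=1$ if $F$ is empty) bounds both $m$ and $n$.

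The last assertion follows at once. Both $m$ and $n$ lie in the finite set $\{1,\dots,C(P,Q)-1\}$, so only finitely many ordered pairs $(m,n)$ can occur, and in particular only finitely many coprime ones.

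No step is a real obstacle here; the substance lies in Theorem~\ref{thm:S-square-finiteness}. The only point to check is that the constant in Theorem~\ref{thm:multi-bound} does not depend on $r$. It does not, because the bound is applied factor by factor through a single $S$ determined by $A$. With $A=\Delta$ fixed by $(P,Q)$, the dependence on $P$ and $Q$ alone is therefore justified.
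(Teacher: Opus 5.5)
Your proposal is correct and matches the paper's proof, which simply specialises Theorem~\ref{thm:multi-bound} to $A=\Delta$ and $r=2$. Your unpacked alternative via Corollary~\ref{cor:two-term-rigidity} and Theorem~\ref{thm:S-square-finiteness} is just the proof of that theorem written out in this case.
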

	
	\begin{proof}
		This is Theorem~\ref{thm:multi-bound} with $A=\Delta$ and $r=2$.
	\end{proof}
	
	\begin{corollary}\label{cor:square-class-graph-bound}
		Assume Conjecture~\ref{conj:abc}.  In the two-term equation
		\[
		\Delta y^2=U_mU_n,
		\qquad
		\gcd(m,n)=1,
		\]
		all possible solutions arise from finitely many index pairs satisfying the finite square-class compatibility condition
		\[
		[U_m],[U_n]\in\mathcal G_\Delta,
		\qquad
		[U_m][U_n]=[\Delta],
		\]
		together with the integrality condition
		\[
		U_mU_n/\Delta\in\mathbb Z^2.
		\]
	\end{corollary}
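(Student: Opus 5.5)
The plan is to assemble results already proved rather than introduce anything new. Let \(\Delta y^2=U_mU_n\) with \(\gcd(m,n)=1\) and \(y\in\mathbb Z\).

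First, the finiteness of the index pairs. Corollary~\ref{cor:two-term-bound}, which is Theorem~\ref{thm:multi-bound} with \(A=\Delta\) and \(r=2\), gives a constant \(C(P,Q)\) such that \(\max\{m,n\}<C(P,Q)\). So only finitely many ordered pairs \((m,n)\) can carry a solution. Behind this bound, Theorem~\ref{thm:multi-rigidity} puts each factor in \(\mathcal G_\Delta\), and Theorem~\ref{thm:S-square-finiteness} with \(S=\operatorname{Supp}(\Delta)\) then bounds the index of every term in that finite set of square classes.

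Second, the square-class compatibility condition. For each such pair, Corollary~\ref{cor:finite-graph} gives
\[
[U_m],[U_n]\in\mathcal G_\Delta,
\qquad
[U_m][U_n]=[\Delta].
\]
This relation lives in the finite group \(\mathcal G_\Delta\). The notation \(G_\Delta\) in Corollary~\ref{cor:finite-graph} and \(\mathcal G_\Delta\) here denote the same group, by the definition \(\mathcal G_A=\mathcal G_{\operatorname{Supp}(A)}\).

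Third, the integrality condition. Since \(y\in\mathbb Z\), the integral half of Proposition~\ref{prop:two-term-criterion} gives \(U_mU_n/\Delta\in\mathbb Z^2\).

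Finally, the converse direction. Proposition~\ref{prop:two-term-criterion} shows that the compatibility relation together with integrality is already sufficient for an integral solution. The conditions \([U_m],[U_n]\in\mathcal G_\Delta\) are only necessary and are retained as the finite filter coming from coprimality. Hence every solution arises from a pair in this finite list that meets the stated conditions, and conversely such a pair yields a solution.

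There is no real obstacle, since the result is bookkeeping. The one point to check is that the constant in Theorem~\ref{thm:multi-bound} depends only on \(P,Q\) once \(A=\Delta\) is fixed, which holds because \(S=\operatorname{Supp}(\Delta)\) is determined by \(P\) and \(Q\).
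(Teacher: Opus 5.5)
Your proposal is correct and follows the same route as the paper. The paper's proof also cites Corollary~\ref{cor:finite-graph} for the square-class compatibility, Corollary~\ref{cor:two-term-bound} for the finiteness of the index pairs, and Proposition~\ref{prop:two-term-criterion} for the integrality condition; your extra remarks on the converse direction and on identifying $G_\Delta$ with $\mathcal G_\Delta$ are accurate additions that the paper does not spell out.
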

	
	\begin{proof}
		The square-class compatibility follows from Corollary~\ref{cor:finite-graph}, and the finiteness of the possible indices follows from Corollary~\ref{cor:two-term-bound}.
		The final integrality condition is Proposition~\ref{prop:two-term-criterion}.
	\end{proof}
	
	\begin{theorem}\label{thm:tpqa}
		Assume Conjecture~\ref{conj:abc} over \(K=\mathbb Q(\sqrt\Delta)\).
		Fix \(A\in \mathbb Z\setminus\{0\}\). Then the set
		\[
		T(P,Q,A):=\{n\geq 2 : [U_n(P,Q)]\in G_A\}
		\]
		is finite.
		
		Moreover, let \(r\geq 1\), and suppose that \(n_1,\ldots,n_r\) are pairwise coprime positive integers satisfying
		\[
		Ay^2=\prod_{i=1}^r U_{n_i}(P,Q)
		\]
		for some \(y\in\mathbb Z\).
		After deleting the trivial indices \(n_i=1\), every remaining index belongs to \(T(P,Q,A)\).
		Consequently, the set of possible nontrivial index sets is finite.
		For each fixed ordering convention, only finitely many ordered tuples occur.
	\end{theorem}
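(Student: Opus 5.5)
The finiteness of \(T(P,Q,A)\) will come straight from Theorem~\ref{thm:S-square-finiteness} with \(S=\operatorname{Supp}(A)\). The notation \(G_A\) means \(\mathcal G_{\operatorname{Supp}(A)}\), so the condition \([U_n]\in G_A\) is exactly \([U_n]\in\mathcal G_S\) for this finite set \(S\). Theorem~\ref{thm:S-square-finiteness} then says that only finitely many \(n\ge1\) satisfy it, and the restriction \(n\ge2\) only removes elements. If \(A=\pm1\), then \(S=\emptyset\) and \(\mathcal G_S=\{[1],[-1]\}\); the theorem applies verbatim to this case as well.

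For the second assertion, take a solution \(Ay^2=\prod_i U_{n_i}\) with pairwise coprime \(n_i\). Theorem~\ref{thm:multi-rigidity} gives \([U_{n_i}]\in G_A\) for every \(i\). Any index with \(n_i\ge2\) therefore lies in \(T(P,Q,A)\) by definition. The indices equal to \(1\) contribute the factor \(U_1=1\), so deleting them changes neither the product nor the equation. Several indices could equal \(1\), since \(\gcd(1,1)=1\); this is exactly why they are deleted rather than counted.

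The finiteness of index sets follows from pairwise coprimality. Any two nontrivial indices \(n_i,n_j\ge2\) with \(i\ne j\) satisfy \(\gcd(n_i,n_j)=1\), so they are distinct, because \(\gcd(n,n)=n\ge2\). The nontrivial indices therefore form a genuine subset of the finite set \(T(P,Q,A)\), and there are at most \(2^{|T(P,Q,A)|}\) such subsets. For ordered tuples, a subset of size \(t\le|T|\) has at most \(t!\) orderings. Once any fixed convention for placing trivial indices is chosen (for instance, deleting them, or fixing \(r\)), only finitely many ordered tuples occur.

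I expect no real obstacle, since everything reduces to the two earlier theorems. The one point needing care is bookkeeping rather than mathematics: the statement must exclude \(n=1\), and trivial indices must be discarded, because otherwise arbitrarily long tuples of \(1\)'s would defeat any finiteness claim about tuples when \(r\) is unrestricted.
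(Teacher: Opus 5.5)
Your proposal is correct and follows the paper's proof essentially step for step: membership of the nontrivial indices in \(T(P,Q,A)\) comes from Theorem~\ref{thm:multi-rigidity}, finiteness of \(T(P,Q,A)\) from Theorem~\ref{thm:S-square-finiteness} with \(S=\operatorname{Supp}(A)\), and finiteness of index sets from their being subsets of this finite set. You also make explicit two points the paper leaves implicit: that nontrivial pairwise coprime indices are automatically distinct, and how the trivial indices \(n_i=1\) interact with the ordering convention.
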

	
	\begin{proof}
		By Theorem~\ref{thm:multi-rigidity}, each occurring term \(U_{n_i}\) has signed squarefree part supported on \(\operatorname{Supp}(A)\).
		Hence, whenever \(n_i\geq 2\), one has
		\[
		[U_{n_i}(P,Q)]\in G_A,
		\]
		and therefore \(n_i\in T(P,Q,A)\). The set \(T(P,Q,A)\) is finite by Theorem~\ref{thm:S-square-finiteness}, applied with \(S=\operatorname{Supp}(A)\).
		
		Thus every nontrivial index appearing in a solution belongs to the fixed finite set \(T(P,Q,A)\). Since the indices are required to be pairwise coprime, the possible nontrivial index sets are among the subsets of this finite set satisfying the pairwise coprimality condition. There are only finitely many such subsets. Once an ordering convention is fixed, this also gives only finitely many ordered tuples.
	\end{proof}

\begin{corollary}\label{cor:coprime-graph}
Assume Conjecture~\ref{conj:abc} and fix \(A\in\mathbb Z\setminus\{0\}\).
After deleting trivial factors \(U_1=1\), the nontrivial indices occurring in any solution of
\[
Ay^2=\prod_{i=1}^r U_{n_i}(P,Q),
\qquad \gcd(n_i,n_j)=1 \ (i\neq j),
\]
form a clique in the coprimality graph on the finite set
\[
T(P,Q,A)=\{n\geq 2 : [U_n(P,Q)]\in G_A\}.
\]
The remaining conditions are the finite square-class compatibility condition
\[
\prod_i [U_{n_i}]=[A]
\quad\text{in } \mathbb Q^\times/(\mathbb Q^\times)^2
\]
and the integrality condition from Proposition~\ref{prop:product-square}.
\end{corollary}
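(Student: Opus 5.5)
The plan is to assemble Corollary~\ref{cor:coprime-graph} directly from Theorem~\ref{thm:tpqa}, the pairwise coprimality hypothesis, and Proposition~\ref{prop:product-square}; no new arithmetic input is needed. Fix a solution \(y\in\mathbb Z\) of \(Ay^2=\prod_{i=1}^r U_{n_i}(P,Q)\) with pairwise coprime indices, and let \(J=\{i: n_i\ge 2\}\). Deleting the indices with \(n_i=1\) changes neither the product nor any square class, since \(U_1=1\) and \([U_1]\) is trivial. So the same \(y\) solves the reduced equation \(Ay^2=\prod_{i\in J}U_{n_i}\).

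First, I would place the vertices. By the second part of Theorem~\ref{thm:tpqa}, which rests on Theorem~\ref{thm:multi-rigidity}, every \(n_i\) with \(i\in J\) lies in \(T(P,Q,A)\). This set is finite by the first part of Theorem~\ref{thm:tpqa}, which is where Conjecture~\ref{conj:abc} enters through Theorem~\ref{thm:S-square-finiteness}.

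Next, I would check that the vertices are distinct and pairwise adjacent. If \(n_i=n_j\ge2\) for some \(i\ne j\), then \(\gcd(n_i,n_j)=n_i\ge2\), which contradicts pairwise coprimality. Hence the nontrivial indices form a genuine set of vertices. For distinct \(i,j\in J\), the hypothesis \(\gcd(n_i,n_j)=1\) is precisely the edge condition of the coprimality graph, so the set \(\{n_i:i\in J\}\) is a clique. This is a purely combinatorial observation.

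Finally, I would account for the remaining conditions. Applying Proposition~\ref{prop:product-square} to the reduced product, the existence of the integral solution \(y\) is equivalent to two facts: \(\prod_{i\in J}[U_{n_i}]=[A]\) in \(\mathbb Q^\times/(\mathbb Q^\times)^2\), and the valuation conditions \(\sum_i v_p(U_{n_i})-v_p(A)\in2\mathbb Z_{\ge0}\), that is, \(B/A\) being a nonnegative integer square. The square-class condition is finite in nature because every \([U_{n_i}]\) lies in the finite group \(G_A\) and there are only finitely many cliques.

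I expect no real obstacle here. The only point needing care is the phrase \emph{remaining conditions}: clique membership together with these two conditions should characterize the solutions, and the converse direction is supplied exactly by the "if" part of Proposition~\ref{prop:product-square}, since for any clique satisfying both conditions \(y=\pm\sqrt{B/A}\) is an integer. I would state this equivalence explicitly, to make clear that the original problem reduces to a finite check over cliques of the coprimality graph on \(T(P,Q,A)\).
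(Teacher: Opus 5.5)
Your proposal is correct and follows the paper's proof. You take membership in \(T(P,Q,A)\) from Theorem~\ref{thm:tpqa}, read the clique property directly off pairwise coprimality, and take the remaining conditions from Proposition~\ref{prop:product-square}; your extra observations that nontrivial indices cannot repeat, and that the ``if'' direction of Proposition~\ref{prop:product-square} gives the converse, are valid refinements the paper leaves implicit.
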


\begin{proof}
By Theorem~\ref{thm:tpqa}, every nontrivial index in such a solution belongs to \(T(P,Q,A)\). The pairwise coprimality assumption says precisely that these indices form a clique in the coprimality graph on \(T(P,Q,A)\).
The square-class compatibility and the integrality condition are those of Proposition~\ref{prop:product-square}.
\end{proof}

Together with Theorem~\ref{thm:tpqa}, this proves Corollary~\ref{thm:finite-reduction}.

We shall use the primitive divisor theorem for Lucas sequences in the standard form applicable to nondegenerate Lucas pairs. Our convention uses the recurrence \(U_{n+2}=PU_{n+1}+QU_n\), so that the associated characteristic polynomial is \(x^2-Px-Q\). This differs from the classical \(U_n(P,Q)\) notation only by the harmless replacement of the second parameter by \(-Q\).

\begin{corollary}\label{cor:prim-divisor}
	Let
	\[
	Z_A=\{z(p):p\mid A\text{ and }z(p)\text{ exists}\}.
	\]
	There exists a constant \(N_0=N_0(P,Q)\) such that the following holds.
	Suppose that \(n>N_0\), \(n\notin Z_A\), and \(n\) occurs as one of the indices in a coprime product solution
	\[
	Ay^2=\prod_{i=1}^r U_{n_i}(P,Q),\qquad (n_i,n_j)=1\quad (i\ne j).
	\]
	Then \(U_n\) has a primitive prime divisor \(p\nmid A\), and every primitive prime divisor \(p\nmid A\) of \(U_n\) satisfies
	\[
	v_p(U_n)\ge 2.
	\]
	Thus large admissible indices force primitive divisors outside \(A\) to occur with multiplicity at least two.
\end{corollary}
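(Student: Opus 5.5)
We will combine the primitive divisor theorem with the valuation-parity statement of Lemma~\ref{lem:valuation-parity-multi}. The first step is to choose \(N_0\). By the Bilu--Hanrot--Voutier theorem~\cite{BiluHanrotVoutier2001}, applied to the nondegenerate Lucas pair \((\alpha,\beta)\) after the harmless sign change of the second parameter, there is an absolute bound (one may take \(30\)) beyond which every \(U_n\) has a primitive prime divisor. We set \(N_0=30\), or more cautiously the constant furnished by that theorem for the pair \((P,-Q)\); this depends at most on \(P,Q\).

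Next we show that such a primitive divisor avoids \(A\). Let \(n>N_0\), so that \(U_n\) has a primitive prime divisor \(p\), and \(z(p)=n\) by the remark preceding the corollary. If \(p\mid A\), then \(z(p)\) exists and lies in \(Z_A\), so \(n\in Z_A\), which contradicts the hypothesis \(n\notin Z_A\). Hence every primitive prime divisor of \(U_n\) satisfies \(p\nmid A\). This gives both the existence of a primitive divisor outside \(A\) and shows that the phrase ``every primitive prime divisor \(p\nmid A\)'' covers all primitive divisors.

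Finally we obtain the multiplicity bound. Since \(n=n_j\) for some \(j\) in a pairwise coprime solution, Lemma~\ref{lem:valuation-parity-multi} gives \(v_p(U_n)\equiv 0\pmod 2\) for every prime \(p\nmid A\). For a primitive divisor \(p\) we have \(v_p(U_n)\ge 1\). Being even and positive, it is at least \(2\).

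The only delicate point is the first step. We must check that the convention \(U_{n+2}=PU_{n+1}+QU_n\) gives a genuine Lucas pair in the BHV sense, and that degenerate small cases are excluded. The pair \(\alpha+\beta=P\), \(\alpha\beta=-Q\) is coprime because \(Q=\pm1\). It is nondegenerate by hypothesis, and the roots are real since \(\Delta>0\). So BHV applies directly, and \(N_0\) can be taken uniform, for instance \(N_0=30\). Everything else is immediate from results already proved.
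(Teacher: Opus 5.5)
Your proposal is correct and follows the paper's proof essentially step for step. Bilu--Hanrot--Voutier supplies a primitive divisor for \(n>N_0\); primitivity gives \(z(p)=n\notin Z_A\), so \(p\nmid A\); and Lemma~\ref{lem:valuation-parity-multi} forces \(v_p(U_n)\) to be even and hence at least \(2\). Your explicit choice \(N_0=30\) is a valid sharpening of the paper's \(N_0(P,Q)\); the BHV hypothesis \(\alpha+\beta=P\neq 0\), which you did not check, also holds, since \(P=0\) would give \(\alpha/\beta=-1\) and hence degeneracy.
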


\begin{proof}
	By the primitive divisor theorem of Bilu--Hanrot--Voutier~\cite{BiluHanrotVoutier2001}, applied to the corresponding nondegenerate Lucas pair, there exists a constant \(N_0=N_0(P,Q)\) such that \(U_n\) has a primitive prime divisor for every \(n>N_0\). Enlarging \(N_0\), if necessary, absorbs the finitely many exceptional indices allowed by the theorem. Let \(p\) be a primitive prime divisor of \(U_n\).
	
	If \(p\mid A\), then by primitivity the rank of apparition of \(p\) is \(z(p)=n\).
	This would imply \(n\in Z_A\), contrary to the hypothesis. Hence \(p\nmid A\).
	
	Since \(n\) occurs as one of the indices in a coprime product solution, Lemma~\ref{lem:valuation-parity-multi} applies to the corresponding factor \(U_n\). Hence every prime \(q\nmid A\) dividing \(U_n\) satisfies
	\[
	v_q(U_n)\equiv 0 \pmod 2.
	\]
	In particular, this applies to every primitive prime divisor \(p\nmid A\) of \(U_n\).
	Since \(p\mid U_n\), we have \(v_p(U_n)\ge 1\), and the preceding congruence therefore gives
	\[
	v_p(U_n)\ge 2.
	\]
\end{proof}

\begin{remark}
	The terminology is meant in the weak valuation-theoretic sense that a primitive divisor is forced to occur to order at least two. This is analogous in spirit to Wieferich-type phenomena studied via the \(abc\) conjecture, for instance by Silverman~\cite{Silverman1988}.
\end{remark}

\section{Power-class versions of valuation separation}

The preceding arguments are not intrinsically restricted to squares. The coprimality of the indices separates \(p\)-adic valuations term by term, and therefore the same mechanism applies to \(k\)-th powers.

Let \(k\ge 2\).  We shall say that a nonzero integer \(e\) is \(k\)-th-power-free if \(p^k\nmid e\) for every rational prime \(p\).
Equivalently, in the prime factorisation of \(|e|\), every exponent lies between \(0\) and \(k-1\).

For a nonzero integer \(N\), we use the standard decomposition
\[
N=e s^k,
\]
where \(s\in\mathbb Z_{\ge 1}\), \(e\in\mathbb Z\) is \(k\)-th-power-free, and \(e\) carries the sign of \(N\). With this convention the decomposition is unique.

\begin{theorem}[Power-class rigidity]\label{thm:power-class-rigidity}
	Let \(k\ge 2\), let \(A\in \mathbb Z\setminus\{0\}\), and suppose that \(n_1,\ldots,n_r\) are pairwise coprime positive integers.  If
	\[
	A y^k=\prod_{i=1}^r U_{n_i}(P,Q)
	\]
	for some \(y\in\mathbb Z\), then for every \(i\) there exist \(s_i\in \mathbb Z_{\ge 1}\) and a \(k\)-th-power-free integer \(e_i\) such that
	\[
	U_{n_i}=e_i s_i^k,
	\]
	and every prime divisor of \(e_i\) divides \(A\). Equivalently, for every rational prime \(p\nmid A\), one has
	\[
	v_p(U_{n_i})\equiv 0 \pmod k
	\qquad (1\le i\le r).
	\]
\end{theorem}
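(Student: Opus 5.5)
The argument copies the square case, with valuations now read modulo \(k\). First, \(y\neq 0\): the right-hand side is nonzero, since every \(U_{n_i}\neq0\) by nondegeneracy (Section~2). Now fix a rational prime \(p\nmid A\) and take \(p\)-adic valuations in \(Ay^k=\prod_i U_{n_i}\). Since \(v_p(A)=0\), this gives
\[
k\,v_p(y)=\sum_{i=1}^r v_p(U_{n_i}).
\]
By Lemma~\ref{lem:pairwise-coprime-terms}, the integers \(U_{n_1},\ldots,U_{n_r}\) are pairwise coprime, so \(p\) divides at most one of them. Hence at most one summand on the right is nonzero, and that summand equals \(k\,v_p(y)\). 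Therefore \(v_p(U_{n_i})\equiv0\pmod k\) for every \(i\). This proves the valuation form of the statement.

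Next I will deduce the factorisation form. For each \(i\), take the standard decomposition \(U_{n_i}=e_i s_i^k\) with \(s_i\ge1\) and \(e_i\) \(k\)-th-power-free carrying the sign of \(U_{n_i}\). This decomposition exists and is unique. Explicitly, \(s_i=\prod_p p^{\lfloor v_p(U_{n_i})/k\rfloor}\) and \(v_p(e_i)\) is the remainder of \(v_p(U_{n_i})\) modulo \(k\). If a prime \(p\) divides \(e_i\), that remainder is nonzero, so \(v_p(U_{n_i})\not\equiv0\pmod k\). By the first paragraph this forces \(p\mid A\).

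Conversely, suppose every prime divisor of \(e_i\) divides \(A\). For \(p\nmid A\) we then have \(v_p(e_i)=0\), so \(v_p(U_{n_i})=k\,v_p(s_i)\equiv0\pmod k\). This gives the stated equivalence.

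There is no real obstacle here. The only point needing care is that the valuation identity is used only at primes \(p\nmid A\), so no nonnegativity or integrality issue involving \(v_p(A)\) arises. The coprimality input is already provided by Lemma~\ref{lem:pairwise-coprime-terms}.
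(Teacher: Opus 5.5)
Your proposal is correct and follows the paper's proof essentially verbatim: $p$-adic valuations at $p\nmid A$, pairwise coprimality via Lemma~\ref{lem:pairwise-coprime-terms} to isolate the single nonzero summand, then reading off the support of $e_i$ from the standard decomposition $U_{n_i}=e_i s_i^k$. Your remarks that $y\neq 0$ and that the converse direction of the equivalence also holds are harmless additions the paper leaves implicit.
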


\begin{proof}
	Fix a rational prime \(p\nmid A\). Taking \(p\)-adic valuations in
	\[
	A y^k=\prod_{i=1}^r U_{n_i}
	\]
	gives
	\[
	\sum_{i=1}^r v_p(U_{n_i})=k v_p(y).
	\]
	The right-hand side is divisible by \(k\). By Lemma~\ref{lem:pairwise-coprime-terms}, the integers \(U_{n_1},\ldots,U_{n_r}\) are pairwise coprime. Hence \(p\) divides at most one of them. Therefore each individual valuation \(v_p(U_{n_i})\) is divisible by \(k\).
	
	Writing each \(U_{n_i}\) in the above standard form
	\[
	U_{n_i}=e_i s_i^k
	\]
	with \(e_i\) \(k\)-th-power-free, a prime \(p\) divides \(e_i\) if and only if \(v_p(U_{n_i})\not\equiv 0\pmod k\). The previous paragraph shows that this can happen only when \(p\mid A\).
\end{proof}

This is the \(k\)-th power analogue of Theorem~\ref{thm:multi-rigidity}.

The corresponding \(abc\)-conditional finiteness statement also extends without essential change.

\begin{theorem}\label{thm:s-k-finiteness}
	Let \(k\ge 2\), and assume the number-field \(abc\) conjecture over \(K=\mathbb Q(\sqrt\Delta)\).  Let \(S\) be a finite set of rational primes. Then the set of positive integers \(n\) for which
	\[
	U_n(P,Q)=e s^k
	\]
	with \(e\) \(k\)-th-power-free and \(\operatorname{Supp}(e)\subseteq S\) is finite.
\end{theorem}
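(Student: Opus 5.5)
We follow the proof of Theorem~\ref{thm:S-square-finiteness} step by step, with the exponent \(2\) replaced by \(k\). Suppose \(U_n=e s^k\) with \(e\) \(k\)-th-power-free and \(\operatorname{Supp}(e)\subseteq S\). Then \(|e|\) divides \(\prod_{p\in S}p^{k-1}\), so \(e\) ranges over a finite set depending only on \(S\) and \(k\). As in Lemma~\ref{lem:S-square-growth}, Lemma~\ref{lem:growth} therefore gives
\[
\log|s|=\frac nk\log|\alpha|+O_{P,Q,S,k}(1).
\]

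Next we use Binet's formula to form the \(S\)-unit-type equation
\[
\beta^n+\sqrt\Delta\,e s^k=\alpha^n
\]
in \(K=\mathbb Q(\sqrt\Delta)\), and we apply Conjecture~\ref{conj:abc} to the triple \((\beta^n,\sqrt\Delta\,e s^k,\alpha^n)\). The height lower bound of Lemma~\ref{lem:height-lower} used nothing about the middle term except that it is nonzero. The same archimedean argument therefore gives \(h\ge n\log|\alpha|\). For the radical, Lemma~\ref{lem:units} says that \(\alpha^n\) and \(\beta^n\) are units. The contributing primes thus divide \((\sqrt\Delta e)(s)\). Those dividing \(\sqrt\Delta e\) lie over the fixed set \(S\cup\operatorname{Supp}(\Delta)\) and contribute \(O_{P,Q,S}(1)\). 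Those dividing \((s)\) contribute at most \(\frac12\log N_{K/\mathbb Q}((s))=\log|s|\). This is exactly the argument of Lemma~\ref{lem:radical-upper-S}, and it yields \(\rad_K\le\log|s|+C_2\).

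Combining these bounds, the \(abc\) conjecture gives
\[
n\log|\alpha|\le(1+\varepsilon)\Bigl(\frac nk\log|\alpha|\Bigr)+O(1),
\]
that is,
\[
\Bigl(1-\frac{1+\varepsilon}{k}\Bigr)n\log|\alpha|\le O_{P,Q,S,k,\varepsilon}(1).
\]
Since \(k\ge2\), we may fix \(0<\varepsilon<1\), and then \(1-\frac{1+\varepsilon}{k}\ge\frac{1-\varepsilon}{2}>0\). Because \(\log|\alpha|>0\) by Lemma~\ref{lem:growth}, this bounds \(n\), which gives the finiteness.

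There is no genuine obstacle here. The only point needing care is that the constants must be uniform in \(n\). This holds because \(e\) ranges over a finite set determined by \(S\) and \(k\), and because the radical estimate uses only the support of \(\sqrt\Delta e\) together with \(\log|s|\), never the exponent \(k\) itself. The exponent enters only through the growth relation \(\log|s|\approx\frac nk\log|\alpha|\), and that relation is what makes the argument stronger for larger \(k\).
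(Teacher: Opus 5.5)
Your proof is correct and takes the same approach as the paper: the same Binet triple $(\beta^n,\sqrt\Delta\,es^k,\alpha^n)$, the archimedean lower bound $h\ge n\log|\alpha|$, and the unit-based radical bound $\rad_K\le\log|s|+O(1)$, combined with $\log|s|=\frac nk\log|\alpha|+O(1)$. The only difference is cosmetic: you fix $0<\varepsilon<1$ and use $1-\frac{1+\varepsilon}{k}\ge\frac{1-\varepsilon}{2}$, whereas the paper takes $0<\varepsilon<k-1$, and both choices work.
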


\begin{proof}
	Suppose that
	\[
	U_n=e s^k,
	\]
	where \(e\) is \(k\)-th-power-free and supported on \(S\). Since \(S\) and \(k\) are fixed, the integer \(e\) belongs to a finite set.  By Lemma~\ref{lem:growth},
	\[
	\log |s|=\frac{n}{k}\log|\alpha|+O_{P,Q,S,k}(1).
	\]
	
	As before, Binet's formula gives
	\[
	\beta^n+\sqrt\Delta\, e s^k=\alpha^n.
	\]
	Apply Conjecture~\ref{conj:abc} to the triple
	\[
	(\beta^n,\sqrt\Delta\, e s^k,\alpha^n).
	\]
	The proof of Lemma~\ref{lem:height-lower}, which uses only the archimedean sizes of \(\alpha^n\) and \(\beta^n\), gives
	\[
	h(\beta^n:\sqrt\Delta\, es^k:\alpha^n)
	\ge n\log|\alpha|-O_{P,Q}(1).
	\]
	Similarly, the proof of Lemma~\ref{lem:radical-upper-S} gives
	\[
	\operatorname{rad}_K(\beta^n:\sqrt\Delta\, es^k:\alpha^n)
	\le \log |s|+O_{P,Q,S,k}(1).
	\]
	Indeed, \(\alpha\) and \(\beta\) are units, so the finite primes contributing to the radical lie above primes dividing \(\sqrt\Delta e\) or \(s\); the contribution from \(\sqrt\Delta e\) is bounded in terms of \(P,Q,S,k\), while the contribution from \((s)\) is at most \(\log |s|\).
	Therefore, for every \(\varepsilon>0\), the \(abc\) conjecture gives
	\[
	n\log|\alpha|
	\le
	(1+\varepsilon)\log |s|+O_{P,Q,S,K,k,\varepsilon}(1).
	\]
	Using the estimate for \(\log |s|\), we obtain
	\[
	n\log|\alpha|
	\le
	\frac{1+\varepsilon}{k}\, n\log|\alpha|
	+O_{P,Q,S,K,k,\varepsilon}(1).
	\]
	Choose \(0<\varepsilon<k-1\). Then
	\[
	1-\frac{1+\varepsilon}{k}>0,
	\]
	and since \(\log|\alpha|>0\), the last inequality bounds \(n\). This proves the finiteness claim.
\end{proof}

The valuation-separation theorem reduces the \(k\)-th power case to Lucas terms which are \(k\)-th powers up to primes in a fixed finite set. The corresponding \(abc\)-conditional finiteness statement is as follows.

\begin{corollary}\label{cor:coprime-tuples}
	Let \(k\ge2\), and assume the number-field \(abc\) conjecture over \(K=\mathbb Q(\sqrt\Delta)\).  Fix \(A\in\mathbb Z\setminus\{0\}\). Then there exists a constant \(C=C(P,Q,A,k)\) such that, whenever \(n_1,\ldots,n_r\) are pairwise coprime positive integers and
	\[
	A y^k=\prod_{i=1}^r U_{n_i}(P,Q)
	\]
	for some \(y\in\mathbb Z\), one has
	\[
	\max_{1\le i\le r} n_i<C.
	\]
	In particular, for fixed \(A\) and \(k\), only finitely many nontrivial pairwise coprime index tuples can occur, after deleting factors \(U_1=1\).
\end{corollary}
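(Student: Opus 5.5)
The plan is to combine Theorem~\ref{thm:power-class-rigidity} with Theorem~\ref{thm:s-k-finiteness}, exactly as Theorem~\ref{thm:multi-bound} combined Theorem~\ref{thm:multi-rigidity} with Theorem~\ref{thm:S-square-finiteness}. The key point is that the finite set of primes used is $S=\operatorname{Supp}(A)$, which depends only on $A$ and not on the particular solution or on $r$.

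First, given a solution $Ay^k=\prod_i U_{n_i}(P,Q)$ with pairwise coprime indices, apply Theorem~\ref{thm:power-class-rigidity}. For each $i$ it yields a decomposition $U_{n_i}=e_i s_i^k$ with $e_i$ $k$-th-power-free and every prime divisor of $e_i$ dividing $A$, that is, $\operatorname{Supp}(e_i)\subseteq S$. This step is unconditional and uses only the pairwise coprimality of the $U_{n_i}$ from Lemma~\ref{lem:pairwise-coprime-terms}.

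Second, let $\mathcal N_{S,k}$ denote the set of $n\ge1$ admitting such a representation with $\operatorname{Supp}(e)\subseteq S$. By Theorem~\ref{thm:s-k-finiteness}, applied to this fixed $S$ and $k$ under the $abc$ hypothesis over $K$, the set $\mathcal N_{S,k}$ is finite. Set
\[
C(P,Q,A,k)=1+\max \mathcal N_{S,k},
\]
noting that $\mathcal N_{S,k}$ contains $1$ and so is nonempty. Every index $n_i$ of every solution lies in $\mathcal N_{S,k}$, so $\max_i n_i<C$, uniformly in $r$ and in $y$.

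For the final assertion, delete the indices equal to $1$. The remaining indices form a set of pairwise coprime integers in $\mathcal N_{S,k}\setminus\{1\}$. Pairwise coprimality of indices $\ge2$ forces them to be distinct, so each such index set is a subset of a fixed finite set, and there are only finitely many. Finitely many ordered tuples then follow after fixing an ordering convention, as in Theorem~\ref{thm:tpqa}.

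There is no serious obstacle here. The only points needing care are that $C$ must not depend on $r$, which holds because $S$ depends only on $A$, and that the trivial factors $U_1=1$ must be removed before counting tuples, since otherwise $r$ could be padded with arbitrarily many copies of $1$.
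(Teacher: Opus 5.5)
Your proposal is correct and follows the paper's proof: apply Theorem~\ref{thm:power-class-rigidity} to write each factor as $U_{n_i}=e_i s_i^k$ with $\operatorname{Supp}(e_i)\subseteq\operatorname{Supp}(A)$, then invoke Theorem~\ref{thm:s-k-finiteness} with $S=\operatorname{Supp}(A)$. Your additional details are all correct and are left implicit in the paper: the finite index set contains $n=1$, so its maximum exists; $C$ is independent of $r$; and nontrivial pairwise coprime indices are distinct.
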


\begin{proof}
	By the power-class rigidity theorem, each occurring term \(U_{n_i}\) can be written in the form
	\[
	U_{n_i}=e_i s_i^k,
	\]
	where \(e_i\) is \(k\)-th-power-free and supported on \(\operatorname{Supp}(A)\).
	The \(S\)-supported \(k\)-th-power finiteness theorem, applied with \(S=\operatorname{Supp}(A)\), gives a uniform bound for all such \(n_i\).
\end{proof}

\begin{remark}
	For \(k=2\), this recovers the square-class rigidity and the \(abc\)-conditional boundedness results proved above.  The point of the higher-power formulation is that the argument is governed by the separation of valuations among coprime Lucas factors, rather than by any special feature of quadratic square classes.
\end{remark}
	
	\section{Examples}
	
	\begin{example}[Fibonacci sequence]
		Let \((P,Q)=(1,1)\), so that \(U_n=F_n\) and
		\(\Delta=5\).  Consider
		\[
		5y^2=F_mF_n,\qquad (m,n)=1.
		\]
		Since
		\[
		G_\Delta=\langle -1,5\rangle
		\subset \mathbb Q^\times/(\mathbb Q^\times)^2,
		\]
		Corollary~\ref{cor:two-term-rigidity} gives
		\[
		F_m=e_m s_m^2,\qquad F_n=e_n s_n^2,
		\qquad e_m,e_n\in\{\pm1,\pm5\}.
		\]
		As \(F_m,F_n>0\), this reduces to
		\[
		e_m,e_n\in\{1,5\}.
		\]
		The square-class condition
		\[
		[F_m][F_n]=[5]
		\]
		therefore says that exactly one of \(F_m,F_n\) is a square and the other is five times a square.
		
		The first few values illustrate the obstruction:
		\[
		\begin{array}{c|c|c}
			n & F_n & \operatorname{sqf}(F_n)\\
			\hline
			1 & 1 & 1\\
			2 & 1 & 1\\
			3 & 2 & 2\\
			4 & 3 & 3\\
			5 & 5 & 5\\
			6 & 8 & 2\\
			12 & 144 & 1
		\end{array}
		\]
		Thus the indices \(3,4,6\), for instance, cannot occur as one of the two coprime factors in a solution of \(5y^2=F_mF_n\), since their squarefree parts contain primes other than \(5\).
		
		On the other hand,
		\[
		F_{12}=144=12^2,\qquad F_5=5,
		\]
		and \((12,5)=1\).  Hence
		\[
		5\cdot 12^2=F_{12}F_5,
		\]
		so \((m,n,y)=(12,5,12)\) is a solution. The reversed pair \((5,12,12)\) gives the same unordered solution.
		
		By Cohn's theorem \cite{Cohn1964}, the only nonzero square Fibonacci numbers are
		\[
		F_1=F_2=1,\qquad F_{12}=144.
		\]
		Consequently, in any nontrivial solution of the above equation, the square-class criterion forces the square factor to come from this very small list, while the other factor must be five times a square.
		
	This illustrates that, under the coprimality hypothesis, the square-class restriction is imposed on each factor separately rather than only on the product.
	\end{example}

	\begin{example}[Pell-type Lucas sequence]
		Let $(P,Q)=(2,1)$.  Then
		\[
		\Delta=P^2+4Q=8,
		\qquad
		\rad(\Delta)=2,
		\]
		and the sequence begins
		\[
		U_1=1,
		\quad U_2=2,
		\quad U_3=5,
		\quad U_4=12,
		\quad U_5=29.
		\]
		If
		\[
		8y^2=U_mU_n,
		\qquad
		\gcd(m,n)=1,
		\]
		then the signed squarefree parts of both $U_m$ and $U_n$ must be supported only at $2$. Hence an index $n$ with $U_n=5$ or $U_n=29$ cannot occur with a coprime partner in such an equation.
	\end{example}
	
	\begin{example}[Multi-product obstruction]
		For the Fibonacci sequence, let $A=5$ and suppose that
		\[
		5y^2=F_{n_1}F_{n_2}\cdots F_{n_r}
		\]
		with $n_1,
		\ldots,n_r$ pairwise coprime. Then each individual Fibonacci number $F_{n_i}$ must have signed squarefree part supported on $\{5\}$.  Thus the obstruction is termwise, not merely global: a single factor $F_{n_i}$ containing a prime $p\ne5$ to odd valuation already rules out the equation, regardless of the other coprime factors.
	\end{example}
	
	\begin{remark}
		The theorem should not be tested by looking at a single composite-index term in isolation. A composite-index term $U_n$ may have a large squarefree part in general. The result says that such a term cannot appear inside a coprime product equation of the form \eqref{eq:product} unless its squarefree part is supported on the fixed set of primes dividing $A$.
	\end{remark}

\end{document}